\begin{document}

\newtheorem{thm}{Theorem}[section]
\newtheorem{lem}[thm]{Lemma}
\newtheorem{cor}[thm]{Corollary}
\newtheorem{pro}[thm]{Proposition}
\newtheorem{exm}[thm]{Example}

\theoremstyle{definition}
\newtheorem{defn}{Definition}[section]
\newtheorem{clm}{Claim}[section]
\newtheorem{fact}[clm]{Fact}
\theoremstyle{remark}
\newtheorem{rmk}{Remark}[section]
\newtheorem{note}[rmk]{Note}
\newtheorem{ques}[rmk]{Question}

\def\square{\hfill${\vcenter{\vbox{\hrule height.4pt \hbox{\vrule
width.4pt height7pt \kern7pt \vrule width.4pt} \hrule height.4pt}}}$}
\def\T{\mathcal T}

\newenvironment{pf}{{\it Proof:}\quad}{\square \vskip 12pt}

\title{ 3-manifolds admitting locally large distance 2 Heegaard splittings}
\author{  Ruifeng Qiu and Yanqing Zou }

\thanks{This work was partially supported by  NSFC (No. 11271058, 11571110).}


\begin{abstract}
From the view of Heegaard splitting, it is known that if a closed orientable 3-manifold admits a distance at least three Heegaard splitting, then it is hyperbolic. However, for a closed orientable 3-manifold admitting only distance at most two Heegaard splittings, there are examples showing that it could be reducible, Seifert,  toroidal or hyperbolic. According to Thurston's Geometrization conjecture, the most important piece in eight geometries is hyperbolic. So for  a 3-manifold admitting a distance two Heegaard splittings, it is critical to determine the hyperbolicity of it in studying Heegaard splittings.

Inspired by the construction of hyperbolic 3-manifolds admitting  distance two Heegaard splittings in [Qiu, Zou and Guo,
Pacific J. Math. 275 (2015), no. 1, 231-255],  we introduce the definition of a locally large geodesic in curve complex and also a locally large distance two Heegaard splitting. Then we prove that if a 3-manifold admits a locally large distance two Heegaard splitting,
then it is a hyperbolic manifold or an amalgamation of a hyperbolic manifold and a Seifert manifold along
an incompressible torus, while the example in Section \ref{sec3} shows that there is a non hyperbolic 3-manifold in this case. After examining
those non hyperbolic cases, we give a sufficient and necessary condition for a hyperbolic 3-manifold when it admits a locally large distance two Heegaard splitting.

\end{abstract}

\maketitle

\vspace*{0.5cm} {\bf Keywords}: Hyperbolic 3-Manifold, Heegaard Distance, Curve Complex, Locally Large Geodesic.\vspace*{0.5cm}

AMS Classification: 57M27


\section{Introduction}
In 1898, Heegaard \cite{Hee} introduced a Heegaard splitting for a closed, orientable, triangulated 3-manifold, i.e., there is a closed, orientable
surface cutting this manifold into two handlebodies. Later, Moise \cite{MO} proved that every closed, orientable 3-manifold admits a triangulation.  So each closed orientable 3-manifold admits a Heegaard splitting. This makes studying 3-manifolds through Heegaard splittings possible.

One astonishing result proved by Haken \cite{Ha} is that if all Heegaard splitting of a 3-manifold  are reducible, i.e., there is  an essential simple closed curve in Heegaard surface bounding essential disks on both sides, then this manifold is reducible. Later, Casson and Gordon \cite{cg} defined  a weakly reducible Heegaard splitting and proved that if a 3-manifold has a weakly reducible and irreducible Heegaard splitting,  then it contains an embedded closed incompressible surface, i.e., it is Haken. Both of these two phenomenons drive people to think how Heegaard splittings reflect  3-manifolds.

For classifying 3-manifolds, Thurston \cite{thurston} introduced the Geometrization conjecture (Haken version proved by Thurston \cite{thurston} and full version proved by Perelman \cite{Perelman01, Perelman02, Perelman03}) as follows: for any closed, irreducible, orientable 3-manifold, there are finitely many disjoint, non isotopy essential tori so that after cutting the manifold along those tori, each piece is one of eight geometries. Among all of these eight geometries, one  is hyperbolic,  another one is solvable and the left six pieces are Seifert. In these eight geometries, it is known that Seifert 3-manifolds have been completely classified. Moreover all of their irreducible Heegaard splittings are either vertical or horizontal, see \cite{MS}. Cooper and Scharlemann \cite{CS} studied all irreducible Heegaard splittings of a solvmanifold. And there are series of works on Heegaard splittings of some typical 3-manifolds, such as Lens space, surface $\times S^{1}$ etc.

With the curve complex defined by Harvey \cite{h81},  Hempel \cite{h01} introduced an index-Heegaard distance for studying Heegaard splitting. Basically, this index- Heegaard distance is defined to be the length of a shortest geodesic in curve complex which connects these two boundaries of essential disks from different sides. Then he proved that all Heegaard splittings of a Seifert 3-manifold have distance  at most two; if a 3-manifold contains an essential torus, then all Heegaard splittings of it have distance at most two, where this result is also proved by Hartshorn \cite{ha} and Scharlemann \cite{sch01}. Combined with the Geometrization conjecture, if a 3-manifold admits a Heegaard splitting with Heegaard distance at least three,  then it is hyperbolic. So it seems that if we fully understand all of  distance two Heegaard splittings, then we can fully answer the question that how Heegaard splittings reflect 3-manifolds. So this question is reduced to
 \begin{ques}
 What dose a 3-manifold look like if it only admits distance at most two Heegaard splittings?
 \label{ques:1}
  \end{ques}
 Since the hyperbolic 3-manifolds are the most concerned, given a distance two Heegaard splitting, it is interesting to know whether the corresponding  manifold is hyperbolic or not.

By the definition of a distance two Heegaard splitting, there is an essential simple closed curve  and a pair of essential disks from different handlebodies so that this curve is disjoint from those two essential disks' boundaries. It seems that this Heegaard splitting is simple and hence  whether the manifold is hyperbolic or not should not be hard to answer. However, things for distance two Heegaard splittings are complicated because there are examples showing that a 3-manifold admitting a distance two Heegaard splitting could be Seifert, hyperbolic or contains an essential torus, see \cite{h01,AT,QZG,RT}.
 
Thompson \cite{AT} studied all distance two genus two Heegaard splittings and found that even for genus two  Heegaard splittings, those manifolds could be very complicated. Later, Rubinstein and Thompson \cite{RT} extended this result to genus at least three cases. But their results give no sufficient conditions to determine whether it is hyperbolic or not.

According to Geometrization conjecture, except small Seifert 3-manifolds and hyperbolic 3-manifolds, all of 3-manifolds contain essential tori. It is known that the Heegaard splittings of small Seifert 3-manifolds are well understood. So to answer the question \ref{ques:1}, the first step is to understand possible essential tori in a 3-manifold.

In \cite{QZG}, the authors studied the curve complex and introduced  the definition of a locally large geodesic. Then they constructed infinitely many arbitrary large distance Heegaard splitting. In the proof of Theorem 1.3 in  \cite{QZG}, they found that the locally large property of geodesics forces any geodesic realizing Heegaard distance to share some vertex $\gamma$  in common. So if the resulted manifold contains an essential torus $T^{2}$, then  $T^{2}$ intersects
this Heegaard surface in some essential simple closed curves, which are all isotopic to $\gamma$. Thus  $T^{2}$ intersects that Heegaard surface in fixed essential simple closed curves. Under this circumstance, it seems the corresponding 3-manifold is not hard to understand. For this reason, we introduce the definition of a locally large distance two Heegaard splitting.

A length two geodesic realizing Heegaard distance is $\mathcal{G}=\{\alpha, \gamma, \beta\}$, where $\alpha$ and $\beta$ bound essential disks from two sides of the Heegaard surface and $\gamma$ is disjoint from both $\alpha$ and $\beta$. As we know, there is a length two geodesic contains a non separating essential simple closed curve as its middle vertex realizing Heegaard distance. So we assume that $\gamma$ is represented by a non separating essential simple closed curve. Let $S$ be this Heegaard surface. We say the geodesic $\mathcal {G}$ is locally large if for the surface $S_{\gamma}=\overline{S-\gamma}$, $d_{\mathcal {S}_{\gamma}}(a, b)\geq 11$, for any pair of $a$ and $b$ disjoint from $\gamma$, where both $a$ and $b$ bound essential disks in different sides of $S$ respectively.  Moreover, by the definition of a locally large geodesic, we say a Heegaard splitting is locally large if there is a locally large geodesic realizing Heegaard distance.

The main result is
\begin{thm}
If a closed orientable manifold $M$ admits a locally large distance two Heegaard splitting $V\cup_{S}W$, then
$M$ is a hyperbolic manifolds or an amalgamation of a hyperbolic manifold and a small Seifert manifolds along an incompressible torus. Moreover there is only one essential torus in the non-hyperbolic case up to isotopy.
\label{thm1}
\end{thm}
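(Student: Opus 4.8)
The plan is to combine the Geometrization theorem with a rigidity statement, forced by local largeness, about the way an essential torus meets $S$. Since a distance $2$ Heegaard splitting is strongly irreducible, Casson--Gordon \cite{cg} gives that $M$ is irreducible, and $d(S)\geq 2$ rules out $M=S^{3}$, so $M$ contains no essential sphere. By the Geometrization theorem \cite{thurston,Perelman01,Perelman02,Perelman03}, if $M$ is not hyperbolic then $M$ is either a small Seifert manifold, a Sol-manifold, or a closed irreducible $3$-manifold containing an essential torus; a Sol-manifold is a torus bundle, hence is subsumed by the last case. The small Seifert case can be excluded directly: by \cite{MS} its Heegaard splittings are vertical or horizontal, and for such a splitting $\alpha,\gamma,\beta$ are all carried by the fibered structure, so $d_{\mathcal{S}_{\gamma}}(\alpha,\beta)$ is bounded above by a universal constant far below $11$, contradicting local largeness. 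It therefore suffices to treat the case that $M$ contains an essential torus $T$.

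I would put $T$ in minimal position with respect to $S$, so that $T\cap S$ is a nonempty family of curves essential in both surfaces; then $T_{V}=T\cap V$ and $T_{W}=T\cap W$ are incompressible, and each is a union of annuli (a torus cut along parallel curves). Maximally $\partial$-compressing $T_{V}$ and $T_{W}$ inside $V$ and $W$, as in the proof of Hartshorn's bound \cite{ha}, either reduces $|T\cap S|$ (contradicting minimality) or yields essential disks $a\subset V$ and $b\subset W$ such that $\partial a$ and $\partial b$ each lie within distance one, in the curve complex of $S$, of a component of the multicurve $T\cap S$ (whose components are pairwise disjoint). The crucial claim is that \emph{every component of $T\cap S$ is isotopic in $S$ to $\gamma$}. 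To prove it, suppose some component is essential and not isotopic to $\gamma$. If $a$ and $b$ (and these components) can all be isotoped off $\gamma$, they are essential in $\mathcal{S}_{\gamma}$, and the chain ``$a$ to its nearby component, along $T\cap S$, to the component nearest $b$, to $b$'' has length at most $3$ in $\mathcal{C}(\mathcal{S}_{\gamma})$, so $d_{\mathcal{S}_{\gamma}}(a,b)\leq 3<11$, contradicting local largeness. In the remaining case I would use the essential annuli carried by $T$ to surger one of $\gamma,\alpha,\beta$ into a curve disjoint from $\gamma$ and reduce to the previous situation, controlling all distances by the standard bounds on subsurface projections to $\mathcal{S}_{\gamma}$ of curves obtained by surgering along incompressible annuli, in the spirit of \cite{QZG}. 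I expect this claim to be the main obstacle: it is precisely here that the full strength of the hypothesis, and the value $11$, is used, and since local largeness only constrains curves disjoint from $\gamma$, making the estimate bite in the non-disjoint case demands careful control of projection distances.

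Granting the claim, $T$ is assembled from annuli in $V$ and $W$ all of whose boundary curves are copies of $\gamma$; minimality forces $T\cap S$ to be one or two parallel copies of $\gamma$, and one checks directly that $T$ separates $M$, say $M=M_{1}\cup_{T}M_{2}$. Each piece is the union of a compression body cut from $V$ and one cut from $W$, glued along annulus neighbourhoods of $\gamma$; a direct analysis of this gluing (as indicated in the introduction) identifies one side, say $M_{1}$, as a Seifert fibered space, fibered in the $\gamma$-direction, with incompressible torus boundary and at most two exceptional fibers, i.e.\ a small Seifert manifold. The other side $M_{2}$ is irreducible with incompressible torus boundary and inherits a Heegaard splitting; after an innermost-disk argument the disks $\alpha$ and $\beta$ lie on the $M_{2}$-side, and since $S\cap M_{2}$ is an incompressible subsurface of $\mathcal{S}_{\gamma}$ with boundary on $T\cap S$, a path in its curve-and-arc complex yields one of no greater length in that of $\mathcal{S}_{\gamma}$, so the distance of the induced splitting of $M_{2}$ is at least $d_{\mathcal{S}_{\gamma}}(\alpha,\beta)$ up to a universal additive constant, hence at least $3$. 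Therefore $M_{2}$ is neither Seifert fibered nor toroidal (compare \cite{h01}), and being Haken with incompressible torus boundary it is hyperbolic by Thurston's hyperbolization theorem \cite{thurston}. Thus $M$ is an amalgamation of the hyperbolic manifold $M_{2}$ and the small Seifert manifold $M_{1}$ along the incompressible torus $T$. Finally, any essential torus in $M$ also meets $S$ only in copies of $\gamma$ by the claim, hence is isotopic (by JSJ theory) into $M_{1}$ or $M_{2}$; both pieces are atoroidal (a Seifert fibered space with at most two exceptional fibers and a single torus boundary component contains no essential torus), so every essential torus is parallel to $T$, which gives the uniqueness statement.
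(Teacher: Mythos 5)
Your overall architecture (Geometrization, exclusion of the small Seifert case, rigidity of $T\cap S$, splitting $M$ along $T$ into a small Seifert piece and a hyperbolic piece) parallels the paper, but there is a genuine gap at exactly the step you yourself flag as the main obstacle: proving that \emph{every} component of $T\cap S$ is isotopic to $\gamma$ when the relevant disks and curves cannot be isotoped off $\gamma$. The paper closes this by first proving that every geodesic realizing the Heegaard distance has $\gamma$ as its middle vertex (Fact \ref{fact:4.1}), and the mechanism is not a surgery on $\gamma,\alpha,\beta$ along the annuli of $T$: it is the Masur--Schleimer compressible-hole projection lemma (Lemma \ref{lem:compressible hole}, i.e.\ Lemmas 11.5 and 11.7 of \cite{ms}). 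Since $\gamma$ is non-separating and bounds a disk on neither side, $S_{\gamma}$ is a compressible hole for both $V$ and $W$, so for any essential disk $D$ there is a replacement essential disk with boundary \emph{contained in} $S_{\gamma}$ at subsurface-projection distance at most $3$ from $\pi_{S_{\gamma}}(\partial D)$. For a competing middle vertex $\gamma_{1}$ not isotopic to $\gamma$, one replaces the two disks at the ends of the geodesic by disks disjoint from $\gamma$ and runs the triangle inequality through $\pi_{S_{\gamma}}(\gamma_{1})$, getting $d_{\mathcal{C}(S_{\gamma})}\leq 3+2+2+3=10<11$; no isotopy of the original curves off $\gamma$ is ever needed. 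Your proposed fallback (surgering along the incompressible annuli carried by $T$) does not control the projections to $S_{\gamma}$ of disk boundaries that cut $S_{\gamma}$ — indeed the boundary slopes of those annuli are precisely what you are trying to identify — so the crucial claim, and with it the identification of $M_{1}$, the hyperbolicity of $M_{2}$, and the uniqueness of the torus, remains unproved in your outline. (Note also that the paper needs the strong irreducibility of the splitting, via Fact \ref{fact:4.2}, to cut $|T\cap S|$ down to two; minimality alone does not do this.)

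Two secondary points. First, the small Seifert exclusion is not a one-line ``universal constant far below 11'': the paper invokes Moriah--Schultens and must treat genus-two \emph{vertical} splittings separately (Lemma \ref{lem:Seifert1}, Corollary \ref{cor:1}), where the contradiction is that two non-isotopic curves, each disjoint from disks on both sides, would both be forced isotopic to $\gamma$ by the Fact \ref{fact:4.1} mechanism; the horizontal case is then killed by the vertical annuli, which give $d_{\mathcal{C}(S_{\gamma})}(\partial D_{1},\partial D_{2})\leq 4$. Second, your route to hyperbolicity of $M_{2}$ via ``the induced splitting has distance at least $3$'' is applied to a manifold with torus boundary, where that implication is not what is available; the paper instead verifies directly that $M_{2}$ is irreducible, boundary-irreducible, atoroidal and anannular — the annular case again handled by the compressible-hole projection bound applied to the curves of an essential annulus — and then applies Thurston's hyperbolization theorem for Haken manifolds.
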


A 3-manifold is almost hyperbolic if either  it is hyperbolic or it is  an amalgamation of a hyperbolic 3-manifold and a small non hyperbolic 3-manifold along an incompressible  torus.
Then the conclusion of Theorem \ref{thm1} says that
\begin{cor}
A 3-manifold admitting a locally large distance two Heegaard splitting is almost hyperbolic.
\end{cor}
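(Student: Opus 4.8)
The plan is to deduce the corollary directly from Theorem \ref{thm1} by unwinding the definition of \emph{almost hyperbolic}. Suppose $M$ admits a locally large distance two Heegaard splitting $V\cup_{S}W$. Theorem \ref{thm1} gives a dichotomy: either $M$ is hyperbolic, in which case $M$ is almost hyperbolic by the first clause of the definition; or $M$ is an amalgamation of a hyperbolic $3$-manifold and a small Seifert manifold along an incompressible torus. In the second case I would only need to check that the Seifert piece qualifies as a ``small non-hyperbolic $3$-manifold''. Smallness is inherited verbatim from the statement of Theorem \ref{thm1} (the Seifert block produced there is a small Seifert fibered space), and non-hyperbolicity is immediate: by Geometrization a closed Seifert fibered space carries one of the six Seifert geometries and so admits no complete hyperbolic metric. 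Hence $M$ falls under the second clause of the definition and is almost hyperbolic.

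There is no real obstacle here beyond Theorem \ref{thm1} itself; the corollary is essentially a renaming that discards the finer information in Theorem \ref{thm1} --- namely that the non-hyperbolic piece is Seifert and that the splitting manifold contains a unique essential torus up to isotopy --- in favor of the more uniform terminology. The only point that warrants a sentence of care is making sure the two occurrences of the word ``small'' (small Seifert manifold versus small non-hyperbolic $3$-manifold) refer to the same notion, which they do by construction in the proof of Theorem \ref{thm1}. Thus the entire content of the statement is carried by Theorem \ref{thm1}, and once that theorem is established the corollary requires only the observation that Seifert implies non-hyperbolic.
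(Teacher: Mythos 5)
Your proposal is correct and takes essentially the same route as the paper, which presents the corollary as an immediate restatement of Theorem \ref{thm1} once the definition of ``almost hyperbolic'' is unwound, the only content being that the small Seifert piece is non-hyperbolic. (One cosmetic slip: the Seifert block in the amalgamation has torus boundary rather than being closed, but Seifert pieces with boundary admit no complete hyperbolic structure either, so your conclusion is unaffected.)
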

The Geometrization Conjecture indicates that

(1) a Seifert 3-manifold does not admits a complete hyperbolic structure;

(2) a solvmanifold does not admits a complete hyperbolic structure;

(3) an amalgamation of a complete hyperbolic 3-manifold and a small Seifert 3-manifold along a torus  is not one of those eight geometries,
see \cite{scott}.

Thus combined with the result of Theorem \ref{thm1},

\begin{cor}
Neither a solvmanifold nor a Seifert 3-manifold admits a locally large distance two Heegaard splitting.
\end{cor}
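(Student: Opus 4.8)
The plan is to derive the statement directly from Theorem \ref{thm1} together with the three consequences (1)--(3) of the Geometrization Conjecture recorded above, so the argument is essentially a case analysis. Suppose, for contradiction, that $M$ is a solvmanifold or a Seifert 3-manifold and that $M$ admits a locally large distance two Heegaard splitting $V\cup_{S}W$. By Theorem \ref{thm1}, exactly one of the following holds: either $M$ is hyperbolic, or $M$ is an amalgamation of a hyperbolic 3-manifold and a small Seifert 3-manifold along an incompressible torus.

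In the first case, $M$ would carry a complete hyperbolic structure, which contradicts (1) when $M$ is Seifert and contradicts (2) when $M$ is a solvmanifold; this is where facts (1) and (2) are used. In the second case I would invoke the standard fact that every closed orientable Seifert 3-manifold carries one of the six Seifert geometries and every closed orientable solvmanifold carries the $\mathrm{Sol}$ geometry, so that $M$ is one of Thurston's eight geometries; but by (3) an amalgamation of a complete hyperbolic 3-manifold and a small Seifert 3-manifold along a torus is \emph{not} one of the eight geometries, a contradiction. (Structurally: such an amalgamation is its own JSJ decomposition, with one hyperbolic piece glued to one Seifert piece along an essential torus, hence a non-geometric ``mixed'' manifold.) Either way we reach a contradiction, so no solvmanifold or Seifert 3-manifold admits a locally large distance two Heegaard splitting. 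The two preceding corollaries follow along the same lines: the first is a restatement of Theorem \ref{thm1} through the definition of ``almost hyperbolic'', and the third combines Theorem \ref{thm1} with (1), (2), (3).

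I expect no genuine obstacle here, since all the content sits inside Theorem \ref{thm1}. The one point deserving a moment of care is the identification of closed orientable Seifert and solv manifolds as members of Thurston's eight geometries, which is exactly what makes fact (3) bite in the amalgamation case; note that the naive reasoning ``contains an incompressible torus $\Rightarrow$ non-geometric'' would be incorrect, as $\mathrm{Sol}$ torus bundles and many Seifert manifolds do contain incompressible tori, so one really does need the sharper statement (3) about amalgamations with a hyperbolic piece.
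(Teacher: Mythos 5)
Your argument is correct and is essentially the paper's own (implicit) proof: the corollary is stated there as an immediate combination of Theorem \ref{thm1} with the three Geometrization facts (1)--(3), exactly the case analysis you carry out. Your added remark that one needs the sharper statement (3) about amalgamations with a hyperbolic piece, rather than merely toroidality, is a sound and accurate clarification but not a departure from the paper's route.
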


According to Geometrization conjecture, the most important piece of those eight geometries is hyperbolic. Thus giving a sufficient condition for a hyperbolic 3-manifold is critical in studying Heegaard splittings. But the example in Section \ref{sec3}  shows that  the manifold $M$ in theorem \ref{thm1} could be a non hyperbolic manifold. So to give a sufficient condition for  a hyperbolic  3-manifold, we need to eliminate the possible essential torus in Theorem \ref{thm1}. For this purpose,  we introduce some definitions.

An essential simple  closed curve in Heegaard surface is a co-core for a handlebody if there is an essential disk in this handlebody so that its boundary intersects this curve in one point. The definition of a domain for an essential simple closed curve is in Section \ref{sec5}.
The proof of Theorem \ref{thm1} implies that under the locally large condition of $\gamma$, the non hyperbolic case happens  if
two copies of $\gamma$ bounds essential tori in both sides of $S$ and $\gamma$ is not co-core on either side of $S$. So,
\begin{thm}
 Suppose that a closed orientable manifold $M$ has a locally large distance two Heegaard splitting $V\cup_{S}W$. Let $\gamma$ be an essential simple closed curve disjoint from a pair of essential disks from different sides of $S$.  Then $M$ is hyperbolic if and only if either all domains of $\gamma$ have euler characteristic number less than -1 or $\gamma$ is co-core for one side of $S$.
\label{thm2}

\end{thm}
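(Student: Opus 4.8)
The plan is to deduce Theorem \ref{thm2} from Theorem \ref{thm1} together with the structural description of the non-hyperbolic case that is produced inside the proof of Theorem \ref{thm1}, and then to translate the two geometric conditions appearing there into the language of domains and co-cores of Section \ref{sec5}.

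By Theorem \ref{thm1}, $M$ is hyperbolic unless it is an amalgamation of a hyperbolic manifold and a small Seifert manifold along an incompressible torus, in which case that torus is, up to isotopy, the unique essential torus of $M$. As noted after the statement of Theorem \ref{thm2}, the proof of Theorem \ref{thm1} refines this: the non-hyperbolic alternative occurs precisely when \textbf{(a)} two parallel copies of $\gamma$ cobound essential annuli $A_V\subset V$ and $A_W\subset W$, so that gluing $A_V$ to $A_W$ along the two copies of $\gamma$ yields a torus $T\subset M$, \textbf{and} \textbf{(b)} $\gamma$ is a co-core for neither side of $S$ --- which is exactly what makes $T$ incompressible, hence essential. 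The locally large hypothesis $d_{\mathcal{S}_{\gamma}}(a,b)\geq 11$ is what forces \emph{every} essential torus of $M$ to meet $S$ only in copies of $\gamma$, so that no essential torus escapes the description in (a). Hence $M$ is hyperbolic iff (a) or (b) fails; since (b) is already phrased in terms of co-cores, it remains only to establish
\[
\text{(a) holds}\qquad\Longleftrightarrow\qquad\text{not every domain of }\gamma\text{ has Euler characteristic }<-1.
\]
(A domain, being carried by the essential, non-separating, disk-busting curve $\gamma$, can be neither a disk, nor an annulus, nor a pair of pants, so its Euler characteristic is at least $-1$; ``not every domain has Euler characteristic $<-1$'' thus means some domain is a once-punctured torus.) Granting this equivalence, the negation of ``(a) and (b)'' reads ``every domain of $\gamma$ has Euler characteristic $<-1$, or $\gamma$ is a co-core for one side of $S$'', which is the criterion of Theorem \ref{thm2}.

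For the displayed equivalence, fix a handlebody $X\in\{V,W\}$ and maximally compress it along disks disjoint from $\gamma$. Since $\gamma$ is essential and, as $d(V,W)=2$, bounds no disk in $V$ or $W$, the component $X_0$ carrying $\gamma$ is a positive-genus handlebody in which $\gamma$ is disk-busting, and the subsurface of $\partial X_0$ near $\gamma$ records, after the normalisation built into the Section \ref{sec5} definition, a domain $F$ of $\gamma$ on the $X$-side. If some domain $F$ is a once-punctured torus then $X_0$ is a solid torus on whose boundary $\gamma$ has non-zero meridional coordinate, and the associated vertical annulus provides two copies of $\gamma$ cobounding an essential annulus in $X_0$, hence in $X$ after undoing the compressions; this is the annulus half of (a). Conversely, given an essential annulus $A\subset X$ with $\partial A$ two copies of $\gamma$, its boundary avoids the boundaries of the compressing disks, so $A$ meets those disks only in circles, which incompressibility of $A$ removes by isotopy; thus $A$ lies in $X_0$, and analysing an incompressible annulus whose two boundary curves are the disk-busting curve $\gamma$ forces $X_0$ to be a solid torus in which $\gamma$ winds, so $F$ is a once-punctured torus. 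Carrying this out on both sides yields (a), once one also checks --- using the locally large hypothesis, which is what distinguishes and controls $\gamma$ --- that a once-punctured-torus domain on one side is accompanied by one on the other, so that the two-sided condition (a) is detected by the single statement in the display.

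The step I expect to be the main obstacle is exactly this correspondence between (a) and the Euler characteristic of domains: matching the combinatorial definition of a domain from Section \ref{sec5} with the topological existence of an essential annulus, and controlling the low-complexity subsurfaces so that the boundary between hyperbolic and non-hyperbolic falls precisely at Euler characteristic $-1$ versus $<-1$. A related delicate point is that condition (a) concerns both handlebodies while the hyperbolicity criterion quantifies over all domains; confirming that these agree --- equivalently, that a domain of Euler characteristic $-1$ cannot arise on only one side --- is where the locally large hypothesis must be used at full strength, since it is what forces the distinguished curve $\gamma$, controls its domains, and rules out any essential torus other than the one assembled from $\gamma$ in (a).
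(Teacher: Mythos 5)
Your overall frame is the same as the paper's: use Theorem \ref{thm1} to reduce to the toroidal case, characterize that case by a torus assembled from two essential annuli meeting $S$ in copies of $\gamma$ together with the non-co-core condition, and then translate into the language of domains. But the proposal stops short of proving exactly the statements that carry the weight. First, you take ``the non-hyperbolic alternative occurs precisely when (a) and (b) hold'' as something the proof of Theorem \ref{thm1} already delivers. It does not: that proof gives (a) (every essential torus meets $S$ in copies of $\gamma$, one annulus on each side), but the implication ``toroidal $\Rightarrow$ $\gamma$ is not a co-core'' needs its own argument, which the paper supplies in Proposition \ref{pro:2}: if an essential disk $D$ met $\gamma$ in one point, $D$ would meet the essential annulus $A_{1}=T^{2}\cap V$ in a single essential arc, and boundary-compressing $A_{1}$ along the resulting sub-disk turns $A_{1}$ into an inessential disk, a contradiction. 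Second, for ``(a) and (b) $\Rightarrow$ not hyperbolic'' you assert that (b) ``is exactly what makes $T$ incompressible.'' It is only half of it: non-co-core (together with $\gamma$ bounding no disk) makes $T$ incompressible from inside the small Seifert piece $ST_{1}\cup_{A}ST_{2}$, but incompressibility from the other side is not automatic and is precisely where the locally large hypothesis is used again, via the subsurface-projection argument of Fact \ref{fact3.2} invoked in Claim \ref{clm:5.1}: a compressing disk on that side would produce disks of $V$ and $W$ disjoint from $\gamma$ with $d_{\mathcal{C}(S_{\gamma})}\leq 10$, contradicting the hypothesis. Neither argument appears in your write-up.

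Third, the equivalence you single out as the main obstacle --- (a) holds if and only if some domain of $\gamma$ has Euler characteristic $-1$ --- is indeed the crux, and your route does not establish it. A domain in the paper's sense is disk-free with respect to \emph{both} handlebodies, so ``a domain of $\gamma$ on the $X$-side'' obtained by maximally compressing one handlebody along disks disjoint from $\gamma$ is not the same object; moreover nothing in your sketch shows that the boundary of a once-punctured-torus domain bounds a disk on the relevant side, nor that the compressed piece $X_{0}$ containing $\gamma$ is a solid torus. The paper's proof goes through the disk directly: in the toroidal case, boundary-compressing $T^{2}\cap V$ yields a separating essential disk cutting a solid torus out of $V$ whose intersection with $S$ is a once-punctured torus containing $\gamma$, which supplies the $\chi=-1$ domain, and in the converse direction it starts from a once-punctured-torus domain whose boundary bounds a disk, cuts out solid tori on both sides, and uses (b) to produce the two essential annuli inside them before checking essentiality of their union as above. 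The ``one side versus both sides'' point you flag is not resolved by appealing to the locally large hypothesis in the abstract; it is handled by this explicit construction together with the two-sided definition of a domain. As it stands the proposal is a correct plan, but each of its three load-bearing claims is left unproved, and the one argument you do sketch (maximal compressions) would need to be replaced or substantially repaired.
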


We introduce the definition of a geodesic of curve complex in Section \ref{sec2}, construct  a non hyperbolic 3-manifold in Section \ref{sec3} and prove theorem \ref{thm1} and \ref{thm2} in Section \ref{sec4} and \ref{sec5}.

{\bf Acknowledgements.} The authors would like to thank Tao Li for careful checking the earlier version of this manuscript, pointing out the genus two case of Theorem \ref{thm1} and many helpful conversations.

\section{Some needed Lemmas}
\label{sec2}
Let $S$ be a compact surface of genus at least 1, and $\mathcal{C}(S)$ be the curve complex of $S$.
We call a simple closed curve $c$ in $S$ is essential if $c$ bounds no disk in $S$ and is not parallel to $\partial S$. It is known that each vertex of $\mathcal {C}(S)$ is represented by the isotopy class of an essential simple closed curve in $S$. For simplicity, we do not distinguish the essential simple closed curve $c$ and its isotopy class $c$ without any further notation.

 Harvey \cite{h81} defined the curve complex
$\mathcal C(S)$ as follows: The vertices of $\mathcal C(S)$ are the
isotopy classes of essential simple closed curves on $S$, and $k+1$
distinct vertices $x_{0}, x_{1}, \ldots, x_{k}$ determine a
k-simplex of $\mathcal C(S)$ if and only if they are represented by
pairwise disjoint essential simple closed curves. For any two vertices $x$ and $y$
of $\mathcal C(S)$, the distance of $x$ and $y$, denoted by $d_{\mathcal{C}(S)}(x,
y)$, is defined to be the minimal number of 1-simplexes in a
simplicial path joining x to y. In other words,
$d_{\mathcal{C}(S)}(x,y)$ is the smallest integer $n \geq 0$ such that there is
a sequence of vertices $x_{0} = x, . . . , x_{n} =y$  such that
$x_{i-1}$ and $x_{i}$ are represented by two disjoint essential simple closed
curves on $S$  for each $1\leq i \leq n$. For any two sets of vertices in $\mathcal C(S)$, say $X$ and $Y$, $d_{\mathcal{C}(S)}(X,
Y)$ is defined to be $min\bigl\{d_{\mathcal{C}(S)}(x,
y) \| \ x\in X, \ y\in Y\bigr\}$. For the torus or once punctured torus case,  Masur and Minsky  \cite{mm99}
define $\mathcal{C}(S)$ as follows: The vertices of $\mathcal
C(S)$ are the isotopy classes of essential simple closed curves
on $S$, and $k+1$ distinct vertices $x_{0}, x_{1}, \ldots, x_{k}$
determine a k-simplex of $\mathcal C(S)$ if and only if $x_{i}$
and $x_{j}$ are represented by two simple closed curves $c_{i}$
and $c_{j}$ on $S$ such that $c_{i}$ intersects $c_{j}$ in just
one point for each $0\leq i\neq j\leq k$.
The following lemma is well known, see \cite{m, mm99, mm00}. \vskip 2mm

\begin{lem}
$\mathcal{C}(S)$ is connected, and the diameter of $\mathcal{C}(S)$ is infinite.
\label{lem2.1}
\end{lem}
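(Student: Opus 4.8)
\emph{This is a classical fact; the following is the line of argument I would follow.}

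\textbf{Connectedness.} The plan is induction on geometric intersection number. Given two vertices $\alpha,\beta$, realize them by essential simple closed curves in minimal position and induct on $i(\alpha,\beta)$. If $i(\alpha,\beta)=0$ then $d_{\mathcal{C}(S)}(\alpha,\beta)\le 1$. If $i(\alpha,\beta)\ge1$, choose a subarc $b\subset\beta$ whose endpoints are two points of $\alpha\cap\beta$ consecutive along $\beta$ and whose interior misses $\alpha$; surgering $\alpha$ along a regular neighbourhood of $b$ produces two simple closed curves $\alpha',\alpha''$, each isotopic off $\alpha$, with $i(\alpha',\beta)+i(\alpha'',\beta)<i(\alpha,\beta)$. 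A short case check shows at least one of them, say $c$, is essential, since otherwise $\alpha$ itself would bound a disk (or, in the bounded case, be boundary-parallel). Then $d_{\mathcal{C}(S)}(\alpha,c)\le1$ and $i(c,\beta)<i(\alpha,\beta)$, so the inductive hypothesis gives $d_{\mathcal{C}(S)}(c,\beta)<\infty$ and hence $d_{\mathcal{C}(S)}(\alpha,\beta)<\infty$. For $S$ a torus or once-punctured torus the surgery may return only inessential curves; there I would instead invoke the standard identification of $\mathcal{C}(S)$ with the Farey graph and the classical fact that any two slopes are joined by an edge-path (read off a continued-fraction expansion).

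\textbf{Infinite diameter.} The plan is to exploit a pseudo-Anosov map. Fix a pseudo-Anosov $f\colon S\to S$ (one exists for every $S$ under consideration, an Anosov diffeomorphism serving in the torus case), with attracting and repelling invariant projective measured laminations $[\mu^{+}]\ne[\mu^{-}]$ in $\mathcal{PML}(S)$; both $\mu^{+}$ and $\mu^{-}$ are filling and uniquely ergodic, so that $i(\mu^{\pm},\nu)=0$ forces $[\nu]=[\mu^{\pm}]$. Fix any vertex $c$; by the north--south dynamics of $f$ on the compact space $\mathcal{PML}(S)$ we have $f^{n}(c)\to[\mu^{+}]$ and $f^{-n}(c)\to[\mu^{-}]$ as $n\to\infty$. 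Suppose, for contradiction, that the diameter of $\mathcal{C}(S)$ equals a finite $N$. For each $n$ pick a geodesic $f^{-n}(c)=u^{(n)}_{0},u^{(n)}_{1},\dots,u^{(n)}_{N}=f^{n}(c)$, padding with repeated copies of the endpoint so that it has length exactly $N$. By compactness, pass to a subsequence with $u^{(n)}_{j}\to u^{\infty}_{j}\in\mathcal{PML}(S)$ for every $j$; then $u^{\infty}_{0}=[\mu^{-}]$ and $u^{\infty}_{N}=[\mu^{+}]$. Since consecutive $u^{(n)}_{j},u^{(n)}_{j+1}$ are disjoint, $i(u^{(n)}_{j},u^{(n)}_{j+1})=0$, and continuity of the geometric intersection form on $\mathcal{ML}(S)$ together with the scale-invariance of the condition $i=0$ gives $i(u^{\infty}_{j},u^{\infty}_{j+1})=0$ for all $j$. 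As $\mu^{-}$ is filling and uniquely ergodic, $i(u^{\infty}_{0},u^{\infty}_{1})=0$ forces $u^{\infty}_{1}=[\mu^{-}]$, and inductively $u^{\infty}_{j}=[\mu^{-}]$ for all $j$, contradicting $u^{\infty}_{N}=[\mu^{+}]\ne[\mu^{-}]$.

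\textbf{The main difficulty.} The connectedness half is essentially bookkeeping, plus the separate classical Farey-graph fact for the two small surfaces. The real content is the infinite-diameter half, which is invisible from intersection-number estimates alone — curves at bounded distance in $\mathcal{C}(S)$ can intersect arbitrarily often — and instead rests on Thurston's existence and classification of pseudo-Anosov maps, the continuity and bihomogeneity of geometric intersection number on $\mathcal{ML}(S)$, and the unique ergodicity and filling property of the invariant laminations. These are precisely the ingredients assembled in \cite{m, mm99, mm00}, so in a self-contained treatment I would cite them as black boxes rather than reprove them.
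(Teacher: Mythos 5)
Your sketch is correct, and it should be said up front that the paper does not prove this lemma at all: it records it as a well-known fact and cites Minsky and Masur--Minsky, so you are supplying the standard argument from that literature rather than diverging from anything in the text. Your two halves are the classical ones: connectedness by surgery induction on $i(\alpha,\beta)$ (as in Farb--Margalit), and infinite diameter via north--south dynamics of a pseudo-Anosov on $\mathcal{PML}(S)$ together with compactness, continuity of the intersection pairing, and the filling/uniquely ergodic property of the invariant laminations; the limiting argument you give (pad a putative length-$N$ geodesic from $f^{-n}(c)$ to $f^{n}(c)$, pass to limits, and propagate $i=0$ from $[\mu^-]$ along the chain) is exactly the Kobayashi--Luo style proof reproduced in the cited sources, and it is sound. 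One small repair in the connectedness half: your surgery step requires two intersection points consecutive along $\beta$, so it only runs when $i(\alpha,\beta)\ge 2$; for $i(\alpha,\beta)=1$ on a surface of genus at least $2$ (or with enough punctures) you should instead take the boundary of a regular neighborhood of $\alpha\cup\beta$, which is essential and disjoint from both, giving $d_{\mathcal{C}(S)}(\alpha,\beta)\le 2$ -- your Farey-graph remark already covers the torus and once-punctured torus, where this fix is unavailable and the edge relation is intersection number one anyway. With that sentence added, the proposal is a complete and accurate account of what the paper leaves to the references.
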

A collection $\mathcal {G}=\{a_{0}, a_{1},...,a_{n}\}$  is a geodesic in $\mathcal {C}(S)$
if  $a_{i}\in \mathcal {C}^{0}(S)$ and \[d_{\mathcal {C}(S)}(a_{i},a_{j})=\mid i-j\mid,\]
for any $0\leq i,j\leq n$. And the length of $\mathcal {G}$ denoted by $\mathcal {L(G)}$ is defined to be $n$.
By Lemma \ref{lem2.1}, there is a shortest path in $\mathcal{C}^{1}(S)$ connecting any two vertices of $\mathcal {C}(S)$. Thus for any two distance $n$ vertices $\alpha$ and $ \beta$, a geodesic $\mathcal {G}$ connects $\alpha$ and $ \beta$ if $\mathcal {G}=\{a_{0}=\alpha,...,a_{n}=\beta\}$. Now for any two sub-simplicial complex $X, Y\subset \mathcal {C}(S)$,  a geodesic $\mathcal {G}$ realizing the distance of $X$ and $Y$ if $\mathcal {G}$ connects an element $\alpha\in X$ and an element $\beta \in Y$ so that $\mathcal {L(G)}=d_{\mathcal {C}(S)}(X,Y)$.\vskip3mm

Let $F$ be a compact surface of genus at least 1 with non-empty boundary. Similar to the definition of the curve complex $\mathcal {C}(F)$, we can define the arc and curve complex $\mathcal {AC}(F)$ as follows:

Each vertex of $\mathcal {AC}(F)$ is the isotopy
class of an essential simple closed curve or an essential properly embedded arc in $F$, and a set of vertices form a simplex of $\mathcal {AC}(F)$ if these vertices are represented by pairwise disjoint arcs or curves in $F$. For any two disjoint vertices, we place an edge between them. All the vertices and edges form 1-skeleton of $\mathcal {AC}(F)$, denoted by ${\mathcal {AC}}^{1}(F)$. And for each edge, we assign it length 1. Thus for any two vertices $\alpha$ and $\beta$ in ${\mathcal {AC}}^{1}(F)$, the distance $d_{\mathcal {AC}(F)}(\alpha, \beta)$ is defined to be the minimal length of paths in ${\mathcal {AC}}^{1}(F)$ connecting $\alpha$ and $\beta$. Similarly, we can define the geodesic in $\mathcal {AC}(F)$.\vskip 3mm

When $F$ is a subsurface of $S$, we call $F$ is essential in $S$ if the induced map of the inclusion from
$\pi_{1}(F)$ to $\pi_{1}(S)$ is injective. Furthermore, we call $F$ is a
proper essential subsurface of $S$ if $F$ is essential in $S$ and at least one boundary component of $F$ is essential in $S$. For more details, see \cite{mm00}.  \vskip 3mm

So if $F$ is an essential subsurface of $S$, there is some connection between the $\mathcal {AC}(F)$ and $\mathcal {C}(S)$. For any $\alpha\in \mathcal {C}^{0}(S)$, there is a representative essential simple closed curve $\alpha_{geo}$ such that the intersection number $i(\alpha_{geo}, \partial F)$ is minimal. Hence each component of $\alpha_{geo}\cap F$ is essential in $F$ or $S-F$. Now for $\alpha\in \mathcal {C}(S)$, let $\mathcal {\kappa}_{F}(\alpha)$ be isotopy classes of the essential components of $\alpha_{geo}\cap F$.

For any $\gamma\in \mathcal {C}(F)$, $\gamma'\in \mathcal {\sigma}_{F}(\beta)$ if and only if $\gamma'$ is the essential boundary component of a closed regular neighborhood of $\gamma\cup \partial {F}$.  Now let $\pi_{F}= {\mathcal {\sigma}_{F}}\circ {\mathcal {\kappa}_{F}}$. Then the map $\pi_{F}$ is the subsurface projection defined in \cite{mm00}.\vskip 2mm

We say $\alpha\in \mathcal {C}^{0}(S)$ cuts $F$ if $\pi_{F}(\alpha)\neq \emptyset$. If $\alpha$, $\beta\in \mathcal {C}^{0}(S)$ both cut $F$, we write $d_{\mathcal {C}(F)}(\alpha, \beta)= diam_{\mathcal {C}(F)}(\pi_{F}(\alpha),\pi_{F}(\beta))$. And if $d_{\mathcal {C}(S)}(\alpha, \beta)=1$, then $d_{\mathcal {AC}(F)}(\alpha, \beta)\leq 1$ and $d_{\mathcal {C}(F)}(\alpha, \beta)\leq 2$. \vskip 2mm

The following is immediately followed from the above observation.

\begin{lem}
 Let  $F$ and $S$ be as above, $\mathcal {G}=\{\alpha_{0},\ldots, \alpha_{k}\}$ be a geodesic of $\mathcal {C}(S)$ such that $\alpha_{j}$ cuts $F$ for each $0\leq i\leq k$. Then $d_{\mathcal {C}(F)}(\alpha_{0}, \alpha_{k})\leq 2k$.
 \label{lem:subsurface projection}
\end{lem}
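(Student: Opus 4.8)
The plan is to reduce the global bound to a per-edge bound along the geodesic $\mathcal{G}$ and then add up. The per-edge bound is exactly the elementary fact recorded just before the statement: if $d_{\mathcal{C}(S)}(\alpha,\beta)=1$ and both $\alpha,\beta$ cut $F$, then $d_{\mathcal{C}(F)}(\alpha,\beta)\le 2$.

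Concretely, I would argue as follows. Since $\mathcal{G}=\{\alpha_0,\ldots,\alpha_k\}$ is a geodesic, for each $1\le i\le k$ the vertices $\alpha_{i-1}$ and $\alpha_i$ satisfy $d_{\mathcal{C}(S)}(\alpha_{i-1},\alpha_i)=1$, so they are represented by disjoint essential simple closed curves in $S$. By hypothesis every $\alpha_j$ cuts $F$, hence $\pi_F(\alpha_{i-1})$ and $\pi_F(\alpha_i)$ are nonempty, and the observation preceding the lemma gives $d_{\mathcal{C}(F)}(\alpha_{i-1},\alpha_i)=\mathrm{diam}_{\mathcal{C}(F)}\big(\pi_F(\alpha_{i-1}),\pi_F(\alpha_i)\big)\le 2$. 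Next I would note that $d_{\mathcal{C}(F)}(\cdot,\cdot)$, being a diameter of unions of projections, satisfies the triangle inequality on curves of $S$ cutting $F$: choosing $z_i\in\pi_F(\alpha_i)$ for $1\le i\le k-1$ and arbitrary $x\in\pi_F(\alpha_0)$, $y\in\pi_F(\alpha_k)$, and setting $z_0=x$, $z_k=y$, one has
\[ d_{\mathcal{C}(F)}(x,y)\ \le\ \sum_{i=1}^{k} d_{\mathcal{C}(F)}(z_{i-1},z_i)\ \le\ \sum_{i=1}^{k}\mathrm{diam}_{\mathcal{C}(F)}\big(\pi_F(\alpha_{i-1}),\pi_F(\alpha_i)\big)\ \le\ 2k. \]
Taking the supremum over all such $x,y$ — and observing that the terms in which $x,y$ lie in a single projection are handled the same way, since $\mathrm{diam}_{\mathcal{C}(F)}\pi_F(\alpha_0)\le d_{\mathcal{C}(F)}(\alpha_0,\alpha_1)\le 2\le 2k$ when $k\ge 1$ — yields $d_{\mathcal{C}(F)}(\alpha_0,\alpha_k)\le 2k$.

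The argument is essentially bookkeeping, so I do not expect a serious obstacle. The one point that deserves care — and which the paper has isolated in the paragraph just before the statement — is the per-edge estimate itself: after isotoping $\alpha_{i-1}$ and $\alpha_i$ to minimize intersection with $\partial F$ they can be realized disjointly, so the essential components of $\alpha_{i-1}\cap F$ and of $\alpha_i\cap F$ form a family of pairwise disjoint arcs and curves and hence lie within distance $1$ in $\mathcal{AC}(F)$; the surgery $\sigma_F$ that turns arcs into curves (taking the essential boundary of a regular neighborhood of an arc together with $\partial F$) then costs at most one further unit, giving the bound $2$ in $\mathcal{C}(F)$. Granting that one estimate, everything else is the triangle inequality applied $k$ times.
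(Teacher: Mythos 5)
Your proof is correct and follows the same route as the paper, which treats the lemma as an immediate consequence of the observation stated just before it (each edge of the geodesic contributes at most $2$ to the projection distance) together with the triangle inequality applied $k$ times. Your extra care about choosing representatives $z_i\in\pi_F(\alpha_i)$ and about diameters of single projections is harmless bookkeeping that the paper leaves implicit.
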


For essential curves $\alpha, ~\beta$ in $S$, let $\mid \alpha\cap \beta\mid$ be the minimal geometric intersection number up to isotopy.
We call $\alpha$ and $\beta$ intersect efficiently if the number of $\alpha \cap \beta$ is equal to $\mid \alpha\cap \beta\mid$.

One tool for studying the intersection between essential simple closed curves and arcs in $S$ is bigon Criterion.

\begin{lem}\cite{FM}
Let surface $S$ be as above. Then for any two essential curves $\alpha, \beta$  in $S$,  $\alpha$ and $ \beta$ intersects efficiently if and only if  $\alpha\cup \beta\cup \partial S$ bounds no Bigon or half-bigon in $S$.
\label{lem:bigon criterion}
\end{lem}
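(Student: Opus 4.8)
I would follow the standard route for the bigon criterion: the forward implication by an innermost-disk surgery, and the converse by passing to the universal cover equipped with a hyperbolic metric. For ``efficient $\Rightarrow$ no bigon or half-bigon'' I would argue contrapositively. Suppose $\alpha\cup\beta\cup\partial S$ bounds a bigon or half-bigon, and choose one, $D$, that is innermost, i.e. has interior disjoint from $\alpha\cup\beta$; such a $D$ exists because any arc of $\alpha\cup\beta$ meeting the interior of a bigon cuts off a strictly smaller one, and one iterates. A small ambient isotopy supported near $D$, pushing the $\alpha$-part of $\partial D$ across $D$ past the $\beta$-part, then decreases the number of points of $\alpha\cap\beta$ (by two for a bigon, by one for a half-bigon against $\partial S$) and creates none, so $\alpha$ and $\beta$ were not in minimal position, i.e. did not intersect efficiently. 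The only things to check here are that the innermost disk is genuinely embedded and that the isotopy introduces no new intersections; both are routine.

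For the converse, ``no bigon or half-bigon $\Rightarrow$ efficient,'' the plan is as follows. First I would reduce to simple closed curves in a closed hyperbolic surface: double $S$ along $\partial S$ to obtain $\widehat S$, turning an essential arc into the essential simple closed curve formed with its mirror copy; a half-bigon in $S$ doubles to an honest bigon in $\widehat S$, so the hypothesis gives that the doubled curves bound no bigon in $\widehat S$, and minimal position upstairs descends to minimal position in $S$ by an easy count of intersection points. (When $S$ is the closed torus, one runs the same argument directly with the Euclidean plane and straight lines.) So assume $S$ is closed of genus $\ge 2$ with a hyperbolic metric, let $\pi\colon\mathbb{H}^2\to S$ be the universal covering with deck group $\Gamma$, and let $\alpha,\beta$ be transverse essential simple closed curves forming no bigon. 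The key claim is that \emph{every lift of $\alpha$ meets every lift of $\beta$ in at most one point}. Granting the claim, I would finish by comparing with the geodesic representatives $\alpha^{*}$ and $\beta^{*}$: their lifts are complete geodesics, two of which cross exactly when their pairs of endpoints on $\partial_{\infty}\mathbb{H}^2$ link, and this linking datum depends only on the free homotopy classes, hence is the same for the lifts of $\alpha$ and $\beta$. Since a linked pair of geodesics crosses exactly once and, by the claim, a linked pair of lifts of $\alpha,\beta$ also crosses exactly once, the intersection points of $\alpha$ with $\beta$ are in bijection with those of $\alpha^{*}$ with $\beta^{*}$; as $\alpha^{*},\beta^{*}$ realize $\mid\alpha\cap\beta\mid$, so do $\alpha,\beta$.

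The main obstacle is proving the key claim, and here is how I would attempt it. If some lift $\tilde\alpha$ met some lift $\tilde\beta$ in two points that are consecutive along $\tilde\alpha$, the two subarcs between them would bound an embedded disk $\tilde D\subset\mathbb{H}^2$ — a ``bigon upstairs.'' Choose such a $\tilde D$, over all pairs of lifts and all such pairs of points, whose interior is disjoint from every lift of $\alpha$ and of $\beta$. One wants $\pi|_{\tilde D}$ to be injective, for then $\pi(\tilde D)$ is an embedded bigon in $S$, contradicting the hypothesis. If $\pi|_{\tilde D}$ were not injective, some $g\in\Gamma\setminus\{1\}$ would carry an interior point of $\tilde D$ into $\tilde D$, so $g\tilde D\cap\tilde D\ne\emptyset$ with $g\tilde D$ another minimal lifted bigon; an analysis of how the boundary arcs of $g\tilde D$ (which lie on lifts of $\alpha$ and of $\beta$, hence cannot cross the parallel lifts bounding $\tilde D$) meet $\partial\tilde D$ should force $g\tilde D\subseteq\tilde D$, whence $g\tilde D=\tilde D$; but then $g$ restricts to a self-homeomorphism of the closed disk $\tilde D$, which has a fixed point by the Brouwer theorem, contradicting the freeness of the $\Gamma$-action. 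Turning that ``cannot cross, hence nested'' step into a rigorous innermost/outermost argument about $\partial(g\tilde D)$ relative to $\tilde D$ is the delicate point of the whole proof; the geodesic-comparison step and the doubling reduction are then just bookkeeping.
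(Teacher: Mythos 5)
The paper offers no proof of this lemma at all---it is quoted directly from Farb--Margalit \cite{FM}---and your argument is precisely the standard proof from that source: innermost-disk surgery for the ``efficient $\Rightarrow$ no bigon or half-bigon'' direction, and the universal-cover/geodesic-comparison argument (with doubling along $\partial S$ to convert half-bigons into bigons) for the converse. Your sketch is correct and you have rightly flagged the genuinely delicate points (injectivity of the covering projection on an innermost lifted bigon, and the descent of minimal position through the linking-at-infinity bijection), so your proposal matches the cited proof in substance.
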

Assume that $V$ is a non-trivial compression body, i.e., not the product I-bundle of a closed surface. Then there is an essential simple closed curve in $\partial _{+}V$ bounding an essential disk in $V$. Let $S$ be an essential subsurface in $\partial_{+}V$. We call
$S$ is a hole for $V$ if for any essential disk $D\subset V$, $\pi_{S}(\partial D)\neq \emptyset$. Furthermore, we call an essential subsurface
$S\subset \partial_{+}V$ is an incompressible hole for $V$ if $S$ is a hole for $V$ and is incompressible in $V$. Otherwise, $S$ is a compressible hole for $V$. Masur and Schleimer\cite{ms} studied the subsurface projection of an essential disk, and proved that:
\begin{lem}
Let $V$ be a non-trivial compression body and $S$ be a compressible hole for $V$. Then for an essential disk
$D$ in $V$, there are essential disks $D^{1}$ and $D^{2}$ satisfying:

$\cdot$  for $\partial D$, $\partial D^{1}$ and $\partial S$, they intersect efficiently;

$\cdot$  $\partial D^{2}\subset S$;

$\cdot$ there is one component of $\partial D\cap S$ is disjoint from an component of $\partial D^{1}\cap S$ and
$\partial D^{1}\cap \partial D^{2}=\emptyset$. Furthermore, $d_{\mathcal{AC}(S)}(\pi_{S}(\partial D), \partial D^{2})\leq 3$.
\label{lem:compressible hole}
\end{lem}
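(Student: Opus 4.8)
The plan is to run the disk-surgery argument of Masur and Schleimer \cite{ms}. First I would unpack the hypothesis. Since $S$ is a \emph{compressible} hole for $V$, by definition some essential simple closed curve of $S$ bounds a disk in $V$; call it $D^{2}$, so $\partial D^{2}\subset S$. As $S$ is $\pi_{1}$-injective in the closed surface $\partial_{+}V$, a nontrivial non-peripheral curve of $S$ is nontrivial in $\partial_{+}V$, so $\partial D^{2}$ is essential there and $D^{2}$ is a genuine essential disk of $V$. Next I would put everything in good position: isotope $\partial D$, $\partial D^{2}$, $\partial S$ to be pairwise efficient in the sense of Lemma \ref{lem:bigon criterion}; isotope $D$ to meet $D^{2}$ transversely with $|D\cap D^{2}|$ minimal; and remove every closed component of $D\cap D^{2}$ by the usual innermost-disk surgery on $D^{2}$, which (using irreducibility of the compression body) only isotopes $D$ and leaves $\partial D$ fixed. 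So from now on $D\cap D^{2}$ is a finite (possibly empty) union of arcs, properly embedded in both $D$ and $D^{2}$.

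If $D\cap D^{2}=\emptyset$ I am done: a parallel push-off $D^{1}$ of $D$ is essential, has $\partial D^{1}$ disjoint from $\partial D$ and from $\partial D^{2}$, meets $\partial S$, $\partial D$, $\partial D^{2}$ efficiently, and visibly satisfies the first two bullets. Otherwise I would run the outermost-arc move. Choose an arc $\alpha\subset D\cap D^{2}$ outermost in $D^{2}$, cutting off a subdisk $E\subset D^{2}$ with $\operatorname{int}(E)\cap D=\emptyset$ and $\partial E=\alpha\cup\delta$, $\delta\subset\partial D^{2}\subset S$. Cutting $D$ along the arc $\alpha$ and capping each of the two resulting pieces with a parallel copy of $E$ gives two disks $D'_{\pm}$, each with boundary lying in a neighborhood of $\partial D\cup\delta$ and with strictly fewer arcs of intersection with $D^{2}$; a band-sum argument (rebuild $\partial D$ from $\partial D'_{+}$ and $\partial D'_{-}$, and use that $\partial D$ is essential) shows at least one $D'_{\pm}$ is an essential disk of $V$, which I keep. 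Iterating, I reach an essential disk $D^{1}$ with $D^{1}\cap D^{2}=\emptyset$, so $\partial D^{1}\cap\partial D^{2}=\emptyset$; carrying the copies of $E$ on a fixed side of $D^{2}$, together with the minimality of $D\cap D^{2}$, is meant to keep some component of the trace of the running disk on $S$ disjoint, up to isotopy in $S$, from a fixed component of $\partial D\cap S$ throughout. Finally re-isotope $D^{1}$ back into efficient position against $\partial S$, $\partial D$, $\partial D^{2}$.

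The distance bound is then formal. Since $S$ is a hole, $\kappa_{S}(\partial D)$ and $\kappa_{S}(\partial D^{1})$ are nonempty; picking $a\in\kappa_{S}(\partial D)$ and $b\in\kappa_{S}(\partial D^{1})$ as produced above, $a$ and $b$ are disjoint in $S$, and $b$ is disjoint from $\partial D^{2}$ because $\partial D^{1}\cap\partial D^{2}=\emptyset$. Hence $\pi_{S}(\partial D)\to a\to b\to\partial D^{2}$ is an edge path of $\mathcal{AC}(S)$ of length $\le 3$: the first edge holds since $a\in\kappa_{S}(\partial D)$ can be isotoped off $\pi_{S}(\partial D)=\sigma_{S}\kappa_{S}(\partial D)$, and the last two since consecutive curves are disjoint. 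Therefore $d_{\mathcal{AC}(S)}(\pi_{S}(\partial D),\partial D^{2})\le 3$.

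The main obstacle, and where I would genuinely invoke \cite{ms}, is the middle paragraph: I must arrange the iterated surgeries so that \emph{essentiality} of the disk survives every step --- this is exactly the role played by compressibility of $S$, i.e. by the disk $D^{2}$ living inside $S$ --- while at the same time the trace of the disk on $S$ stays disjoint (up to isotopy) from a fixed component of $\partial D\cap S$ and the boundary is pushed clear of $\partial D^{2}$. These two demands pull in opposite directions, and controlling the number of surgeries and the side of each band is the only non-routine bookkeeping; everything else is standard innermost-disk and outermost-arc surgery.
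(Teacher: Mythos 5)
Your route is the same one the paper itself relies on: the paper's entire proof of Lemma \ref{lem:compressible hole} is the citation to the proofs of Lemmas 11.5 and 11.7 of \cite{ms}, and your setup (taking $D^{2}$ from compressibility of the hole, removing circle intersections by innermost disks, doing outermost-arc boundary compressions capped by subdisks of $D^{2}$, and closing with the three-edge estimate $\pi_{S}(\partial D)\to a\to b\to\partial D^{2}$) is a sketch of exactly that surgery argument. The base case $D\cap D^{2}=\emptyset$ and the final distance computation are correct as written.

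As a standalone proof, however, there is a genuine gap, and it sits precisely where you flag it. The real content of the lemma is the third bullet: after the surgeries one must have \emph{simultaneously} $\partial D^{1}\cap\partial D^{2}=\emptyset$ and a component $b$ of $\partial D^{1}\cap S$ disjoint (up to isotopy) from a component $a$ of $\partial D\cap S$. Your iteration yields the first condition, but the described bookkeeping does not yield the second: the boundary of the final disk is a union of sub-arcs of $\partial D$ and push-offs of sub-arcs of $\partial D^{2}$, and in minimal position those $\delta$-pieces may cross every component of $\partial D\cap S$, while the surviving sub-arcs of $\partial D$ need not contain a whole component of $\partial D\cap S$ (the corners created by the surgeries can lie in the interior of every such component). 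So ``carrying the copies of $E$ on a fixed side of $D^{2}$'' does not by itself produce the required disjoint pair, and ``invoking \cite{ms}'' at that point is not a step of a proof but a restatement of the statement being proved. Since the paper offers nothing beyond the citation, you have effectively reproduced the paper's position rather than fallen behind it; but to turn your write-up into a proof you would need to import the actual argument of Lemmas 11.5 and 11.7 of \cite{ms} for this step (where essentiality, disjointness from $\partial D^{2}$, and the retained component of $\partial D\cap S$ are controlled together), not merely its conclusion.
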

\begin{proof}
See the proof of Lemma 11.5 and Lemma 11.7 \cite{ms}.
\end{proof}

Let $\{x_{1}, x_{2},..., x_{n}\}$  be  a collection of some different points in  $S$. For the manifold $S\times S^{1}$, $\mathcal{SC}=\{x_{i}\times S^{1}, i=1,..,n\}$ is a collection of essential simple closed curves. A closed orientbale 3-manifold is Seifert if it is obtained by doing Dehn surgeries along $\mathcal {SC}$ as follows.  We remove a regular neighborhood of $x_{i}\times S^{1}$ and glue back a solid torus where the meridian curve coincides with some $\beta_{i}/\alpha_{i}$ slope. If $\beta_{i}/\alpha_{i}\neq 0$,
then  this fiber is called exceptional. As we know, if all of these exceptional fiber are removed, then $M-\cup N(\beta_{i}/ \alpha_{i})$ is $F$ $\times$  $S^{1}$. Somehow, $M$ is represented as follows: $M=\{S, \beta_{1}/\alpha_{1},..., \beta_{n}/\alpha_{n}\}$, where  $S$ is called the base surface. It is known that this representation is unique with some permutations in order.

In studying irreducible Heegaard splittings of a Seifert manifold $M$,  there are two standard ones named as vertical and horizontal Heegaard splittings. To be clear,  for a Seifert manifold $M=\{S, \beta_{1}/\alpha_{1},..., \beta_{n}/\alpha_{n}\}$ with projection $f:M\rightarrow S$, let $S= D\cup E\cup F$ be a cell decomposition where each component of $D$ or $F$  contains  at most one singular point in its interior and each component of $E$ is a square with one pair of opposite edges in $D$ and the other one in $F$, where both $D\cup E$ and $E\cup F$ are connected. Then the union of $H_{1}=f^{-1}(D)\cup E\times [0, \frac{1}{2}]$ is a handlebody  and $H_{2}$, the complement of $H_{1}$ in $M$,  is also a handlebody which is homeomorphic to $f^{-1}(F)\cup E\times[\frac{1}{2}, 1]$, where $S^{1}=[0,1]/\sim$. So $H_{1}\cup_{\partial H_{2}}H_{2}$ is a Heegaard splitting of $M$, called a vertical Heegaard splitting.  The construction of a vertical Heegaard splitting shows that for each Seifert manifold, it admits  a vertical Heegaard splitting. Knowing that fact, people wonder that whether there is another type Heegaard splitting for a Seifert manifold in general or not. However, there is no other type of Heegaard splitting for a Seifert manifold in general. In \cite{MS}, Moriah and Schultens proved that except some special cases, almost all of Seifert 3-manifolds admit only vertical Heegaard splittings.  They \cite{MS} also showed that there are horizontal Heegaard splittings for some Seifert 3-manifolds as follows. Taking a surface bundle $M_{1}=F\times I/(x, 0)\sim (\psi(x), 1)$, where $\chi(F)\leq 0$ with one boundary component and $\psi: F\times \{1\}\rightarrow F\times \{0\}$ is a periodic homeomorphism and fixes $\partial F$ point by point. Let $M$ be a Dehn filling of $M_{1}\cup D\times S^{1}$, where the longitude goes to $\partial F$. Then $\partial F\times \{0, \frac{1}{2}\}$ bounds an annulus $\mathcal {A}$ in $D\times S^{1}$ which cuts out an I-bundle of $\mathcal {A}$.  It is not hard to see that $F\times\{0,\frac{1}{2}\}\cup \mathcal {A}$ cuts $M$ into two handlebodies, where both of these two handlebodies are compact surfaces product I-bundles. So it gives a Heegaard splitting of $M$, called a horizontal Heegaard splitting. A result in \cite{MS} said that a Seifert manifold admits a horizontal Heegaard splitting if and only if its euler number is zero. Moreover, they \cite{MS} proved that all irreducible Heegaard splittings of a Seifert manifold is either vertical or horizontal.

 From the definition of a Seifert 3-manifold, if $M$ has a genus at least 1 base surface $S$ or $S^{2}$ but with at least 4 exceptional fibers, $M$ contains an essential torus $T^{2}$. Then for any strongly irreducible Heegaard splitting $H_{1}\cup_{\partial H_{1}} H_{2}$, there are two essential annuli $\mathcal {A}_{1}\subset H_{1}$ and $\mathcal {A}_{2}\subset H_{2}$ with $\partial \mathcal {A}_{1}\cap \partial \mathcal {A}_{2}=\emptyset$. If $M$ has $S^{2}$ as its base surface with at most three exceptional fibers, then

\begin{lem}
(1) for a vertical Heegaard splitting, there are essential disks $D_{1}$ and $D_{2}$ from two sides of Heegaard surface so that their boundaries intersects in at most two points;

(2) for a horizontal  Heegaard splitting,  there is an essential simple closed curve $C$ and  two essential annuli  $\mathcal{A}_{1}=C\times[0,\frac{1}{2}]$ in $H_{1}$ and $\mathcal {A}_{2}=C\times [\frac{1}{2},1]$ in $H_{2}$ so that $\partial {\mathcal {A}_{1}}\cap \partial{\mathcal {A}_{2}}$ contains at most one point.

\label{lem:Seifert1}
\end{lem}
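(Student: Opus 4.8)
\emph{Proof strategy.} Both parts are obtained by unwinding the two explicit models recalled just above and reading off the required disks, resp.\ annuli, by hand; since the base is $S^{2}$ with at most three exceptional fibres $\beta_{1}/\alpha_{1},\dots ,\beta_{n}/\alpha_{n}$, these models can be taken in their smallest form, with both sides handlebodies of genus at most $2$, so that the proof is a short case analysis, the substantive case being $n=3$.

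For (1), fix the fibration $f\colon M\to S^{2}$, disjoint disks $B_{1},\dots ,B_{n}\subset S^{2}$ with $B_{i}$ containing the $i$-th singular point and $V_{i}=f^{-1}(B_{i})$ a fibred solid torus, and a minimal cell decomposition $S^{2}=D\cup E\cup F$ of the type above; then $H_{1}=f^{-1}(D)\cup E\times[0,\tfrac12]$ and $H_{2}=M-H_{1}$, homeomorphic to $f^{-1}(F)\cup E\times[\tfrac12,1]$, are handlebodies of genus at most $2$, one of the form $V_{i}\cup_{1\text{-handle}}V_{j}$ and the other a fibred solid torus with a single $1$-handle attached to itself. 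On the first side take $\delta_{1}$ to be the meridian disk of $V_{i}$, isotoped inside the vertical torus $\partial V_{i}$ so that $\partial\delta_{1}$ misses the feet of the $1$-handle; then $\partial\delta_{1}$ is an essential curve of $S$ lying in $\partial V_{i}\cap S$. On the other side take $\delta_{2}$ to be the meridian disk of the fibred solid torus, isotoped off its two feet, or --- in the degenerate sub-cases --- the co-core of the $1$-handle. Both $\delta_{1},\delta_{2}$ are essential, and it remains to bound $|\partial\delta_{1}\cap\partial\delta_{2}|$ on $S$: the two vertical tori carrying the boundaries are disjoint except over the single band constituting $E$, and in that one annular region the two curves can be normalised along the fibre direction to meet in at most two points, whence $|\partial\delta_{1}\cap\partial\delta_{2}|\le 2$. (When $n\le 2$, $M$ is a lens space and the construction returns a stabilised, hence reducible, splitting, for which the bound is immediate.)

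For (2), $M$ is a Dehn filling of $M_{1}\cup(D^{2}\times S^{1})$ with $M_{1}=F\times I/(x,0)\sim(\psi(x),1)$, $\psi$ periodic, $\psi|_{\partial F}=\mathrm{id}$, $\chi(F)\le 0$; the Heegaard surface is $F\times\{0,\tfrac12\}\cup\mathcal{A}$ with $\mathcal{A}\subset D^{2}\times S^{1}$ the annulus bounded by $\partial F\times\{0,\tfrac12\}$, and $H_{1},H_{2}$ are the two compact-surface$\,\times\,I$ pieces it cuts off. Put $C=\partial F$, pushed onto the Heegaard surface. Since $\psi|_{\partial F}=\mathrm{id}$, the product annulus $\partial F\times[0,\tfrac12]\subset F\times[0,\tfrac12]$ fits across $F\times\{0\}$ with the part of the $I$-bundle over $\mathcal{A}$ lying in $H_{1}$, and their union is a properly embedded annulus $\mathcal{A}_{1}=C\times[0,\tfrac12]$ in $H_{1}$; symmetrically $\mathcal{A}_{2}=C\times[\tfrac12,1]\subset H_{2}$. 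As $\partial F$ is essential in $F$, both $\mathcal{A}_{i}$ are incompressible, hence essential, in their handlebodies. Finally each $\partial\mathcal{A}_{i}$ consists of two parallel copies of $C$, one on a level copy of $F$ and one on $\mathcal{A}$; choosing the copies on $\mathcal{A}$ to be the two boundary circles of a common subannulus of $\mathcal{A}$ and pushing the others slightly into the copies of $F$, the four traces can be isotoped to meet in at most one point, which is the claim.

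The work is bookkeeping, not ideas. For (1) the delicate step is keeping the cell decomposition small enough that $H_{1},H_{2}$ truly have genus $\le 2$ and then tracking, through the sub-cases (how the $B_{i}$ distribute between $D$ and $F$, and whether some $\alpha_{i}=1$), exactly which curve on $S$ each chosen disk's boundary becomes, so that the sharp bound $2$ --- and not a larger intersection number --- drops out; the sub-cases where the meridian-disk recipe must be replaced by a $1$-handle co-core require the most care. For (2) the only subtlety is to confirm that the vertical annuli written down are genuinely essential and to arrange the push-offs on $\mathcal{A}$ so that the two traces cross just once.
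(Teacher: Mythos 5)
Your proof of part (1) has a genuine gap: the meridian--meridian choice does not satisfy the claimed bound, and the justification offered for it is based on a false picture. In the genus two vertical splitting, $H_{1}=f^{-1}(B_{1}\cup B_{2})\cup R\times[0,\frac12]$ and $H_{2}=f^{-1}(F)\cup R\times[\frac12,1]$, and you take $\delta_{1}$ to be the meridian disk of $V_{1}=f^{-1}(B_{1})$ and $\delta_{2}$ the meridian disk of $f^{-1}(F)$. The two vertical tori $f^{-1}(\partial B_{1})$ and $f^{-1}(\partial F)$ are \emph{not} ``disjoint except over the band'': they share the entire vertical annulus $f^{-1}(\partial B_{1}\cap\partial F)$, i.e.\ they overlap over most of $\partial B_{1}$. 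In that annulus $\partial\delta_{1}$ runs $\alpha_{1}$ times across carrying nonzero fibre-direction winding (it is confined to half of each fibre over the edge $e_{1}$, so it cannot unwind there), while $\partial\delta_{2}$ runs $\alpha_{3}$ times across. Computing in $H_{1}(S)$ with the fibre over a point of $\partial B_{1}\cap\partial F$ and a section of $f^{-1}(\partial B_{1})$ contained in $S$, one gets for the natural position of this pair $\hat{i}_{S}(\partial\delta_{1},\partial\delta_{2})=\pm\beta_{1}'\alpha_{3}$ with $\beta_{1}'$ coprime to $\alpha_{1}\ge 2$, hence the geometric intersection number of this pair is at least $\alpha_{3}$; for a $(2,3,5)$-fibration it is at least $5$, so no ``normalisation along the fibre direction'' brings it to at most two points, and you give no argument for any other choice. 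The disks that do work, and the ones the paper actually uses in the genus two case, are vertical ones: the co-core $b\times[0,\frac12]$ of the band of $H_{1}$ and a vertical disk $b'\times[\frac12,1]$ in $H_{2}$, whose boundaries meet in exactly two points; in your write-up the co-core appears only as a fallback in ``degenerate sub-cases''. You also restrict to the ``smallest'' model of genus at most $2$: the paper additionally disposes of weakly reducible splittings trivially and of strongly irreducible vertical splittings of genus at least $3$ via Hempel's Corollary 3.3 (distance at most $1$), a case your argument does not address.

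Part (2) also misses the content. With $C=\partial F$ the annulus you construct in $H_{1}$ is the vertical annulus over a curve that is boundary-parallel in the base of the $I$-bundle structure on $H_{1}$; it cobounds, with the annulus $\mathcal{A}\subset S$, the $I$-bundle piece of the filling solid torus, so it is boundary-parallel and hence not essential, and the same holds on the $H_{2}$ side. Moreover all four boundary circles are parallel copies of a single curve, so the ``at most one point'' conclusion is vacuous; and in the paper's use of this lemma (the horizontal case of Claim 4.2) the circles $\partial\mathcal{A}_{i}$ must be essential curves of $S_{\gamma}$, whereas with your $C$ they are isotopic to $\gamma$ itself, so the statement you prove could not serve its purpose. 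The real point of (2), which the paper obtains from the proof of Hempel's Theorem 3.5, is that the \emph{periodicity} of $\psi$ produces an essential simple closed curve $C$ in the interior of the fibre $F$ with $|C\cap\psi(C)|\le 1$; then $\mathcal{A}_{1}=C\times[0,\frac12]$ and $\mathcal{A}_{2}=C\times[\frac12,1]$ are essential, and after pushing the two copies of $C\times\{\frac12\}$ apart the only possible intersection is $C\times\{0\}\cap\psi(C)\times\{0\}$, at most one point. Your construction never uses that $\psi$ is periodic, which is precisely the hypothesis that makes such a $C$ exist.
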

\begin{proof}
The proof of second part is contained in the proof of Theorem 3.5 in \cite{h01}. So all we need to prove is the first part.
Since a weakly reducible Heegaard splitting satisfies the conclusion, we only consider all strongly irreducible vertical Heegaard splittings of it.
If this vertical Heegaard splitting has genus at least 3, Corollary 3.3 in \cite{h01} says that it has distance at most 1. Hence there are two essential disks satisfying the conclusion of Lemma \ref{lem:Seifert1}. If this vertical Heegaard splitting has genus 2, by the definition,  one handlebody $H_{1}$ is the union of two closed neighborhood of exceptional fibers and a rectangle $\times [0, \frac{1}{2}]$ and the other one $H_{2}$ is homeomorphic to  an I bundle of one-holed torus with a non trivial Dehn surgery. It is not hard to see that removing two exceptional fibers reduces $M$ into a torus $\times$  I with a non trivial Dehn surgery, where we do the Dehn surgery along an simple closed curve $C$ and the union of a longitude and $C$ bounds an embedded annulus. The rectangle $\times [0, \frac{1}{2}]$ is isotopic to the closed neighborhood of an properly embedded unknotted arc which connects these two boundaries. After removing a rectangle $\times [0, \frac{1}{2}]$, it is changed into the handlebody $H_{2}$.

Let $a$ be an properly embedded arc in this rectangle where it connects a pair of opposite edges. Then $a\times [0,\frac{1}{2}]$ bounds an essential disk $D_{1}$ in $H_{2}$. Let $b$ be an properly embedded essential  arc in once punctured torus which intersects the longitude empty. Then $b\times [\frac{1}{2}, 1]$ bounds an essential disk $D_{2}$ in $H_{2}$. It is not hard to see that $\partial D_{1}$ intersects $ \partial D_{2}$  in two points.
\end{proof}

Hempel \cite{h01} showed that for a vertical Heegaard splitting, its genus equal to the sum of the number of rectangles  and 1. So it means that only a Seifert 3-manifold with base surface $S^{2}$ with at most three exceptional fibers admits a genus 2 vertical Heegaard splitting. By the proof of Lemma \ref{lem:Seifert1}, for a strongly irreducible vertical Heegaard splitting of genus 2,

\begin{cor}
there are two essential disks $D_{1}$ and $D_{2}$ of two sides so that there are two non isotopy essential simple closed curve $C_{1}$ and
$C_{2}$ in Heegaard surface disjoint from both of them.
\label{cor:1}
\end{cor}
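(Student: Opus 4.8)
The plan is to argue directly from the two compressing disks $D_{1}$ and $D_{2}$ produced in the proof of Lemma \ref{lem:Seifert1}. Recall from that proof that for a genus $2$ strongly irreducible vertical Heegaard splitting $H_{1}\cup_{S}H_{2}$ of the Seifert manifold in question, $H_{1}=N(f_{1})\cup N(f_{2})\cup (R\times[0,\tfrac12])$ is a pair of solid tori joined by a $1$-handle $R\times[0,\tfrac12]$, while $H_{2}$ is an $I$-bundle over a one-holed torus $Q$ (after a nontrivial Dehn surgery). The disk $D_{1}=a\times[0,\tfrac12]$, with $a$ the arc joining the pair of opposite edges of $R$ that are \emph{not} glued onto $N(f_{1})\cup N(f_{2})$, is the cocore disk of that $1$-handle; hence $D_{1}$ is a separating disk of $H_{1}$, and $\partial D_{1}$ separates $S$ into two one-holed tori $T_{1}$ and $T_{2}$. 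The disk $D_{2}=b\times[\tfrac12,1]$, with $b$ an essential arc of $Q$, is a non-separating disk of $H_{2}$, so $\partial D_{2}$ is a non-separating curve of $S$; and, as recorded in that proof, $\partial D_{1}$ and $\partial D_{2}$ meet in exactly two points, realized efficiently.

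The first step is to cut $S$ along $\partial D_{1}\cup\partial D_{2}$ and identify the pieces. Since $|\partial D_{1}\cap\partial D_{2}|=2$ efficiently, Lemma \ref{lem:bigon criterion} forbids a bigon between $\partial D_{1}$ and $\partial D_{2}$, so no component of $\partial D_{2}\cap T_{i}$ is inessential or boundary-parallel in $T_{i}$; as $\partial D_{2}$ crosses the separating curve $\partial D_{1}$ exactly twice it alternates between $T_{1}$ and $T_{2}$, so $\partial D_{2}\cap T_{i}$ is a single essential spanning arc $c_{i}$ of the one-holed torus $T_{i}$. An essential arc in a one-holed torus is non-separating (a separating one would force genus $0$), so cutting $T_{i}$ along $c_{i}$ yields a single annulus $A_{i}$ ($\chi$ goes from $-1$ to $0$, and the result is connected with two boundary circles). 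Let $C_{i}$ be the core curve of $A_{i}$. By construction $C_{i}$ lies in the interior of $T_{i}$, hence is disjoint from $\partial D_{1}$, and is disjoint from $c_{i}=\partial D_{2}\cap T_{i}$ and from $\partial D_{2}\cap T_{3-i}$, hence disjoint from all of $\partial D_{2}$.

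It then remains to check that $C_{1}$ and $C_{2}$ are essential in $S$ and non-isotopic there. Cutting $T_{i}$ along the essential curve $C_{i}$ gives a pair of pants (one boundary circle of $T_{i}$ becoming three), which stays connected upon regluing $T_{3-i}$ along $\partial D_{1}$; hence $C_{i}$ is non-separating in $S$, in particular essential. Now suppose $C_{1}$ were isotopic to $C_{2}$ in $S$. As $C_{1}\subset T_{1}$ and $C_{2}\subset T_{2}$ are disjoint, they would cobound an embedded annulus $A\subset S$ with $\partial A=C_{1}\cup C_{2}$ disjoint from the connected curve $\partial D_{1}$; therefore either $\partial D_{1}\cap A=\emptyset$, which puts $A$ into $S\setminus\partial D_{1}=T_{1}\sqcup T_{2}$ and hence $C_{1},C_{2}$ into the same component, contradicting $C_{i}\subset T_{i}$; or $\partial D_{1}$ lies in the interior of $A$, which forces $\partial D_{1}$ to be isotopic to the core $C_{1}$ of $A$, contradicting that $\partial D_{1}$ is separating while $C_{1}$ is not. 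Hence $C_{1}\not\simeq C_{2}$, and $C_{1},C_{2}$ are the two required curves.

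The step I expect to be the main obstacle is the opening one: reading off from the construction in the proof of Lemma \ref{lem:Seifert1} that $\partial D_{1}$ really is the separating cocore curve, cutting $S$ into two one-holed tori, and then combining this with the bigon criterion to see that $\partial D_{2}$ meets each piece in precisely one essential spanning arc. Once this normal form for $\partial D_{1}\cup\partial D_{2}$ is secured, locating the complementary annuli $A_{1},A_{2}$ and running the cobounding-annulus argument for non-isotopy are routine.
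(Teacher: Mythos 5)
Your construction is correct, but it reaches the conclusion by a different route than the paper. The paper's own proof is a one-line appeal to the explicit model of $H_{2}$ as a once-punctured-torus $I$-bundle (with a Dehn surgery): it takes $C_{1}$ and $C_{2}$ to be the longitudes in the upper and lower horizontal boundaries, and simply asserts that they work. You instead forget the Seifert/$I$-bundle structure entirely and argue intrinsically in $S$ from the two facts recorded in the proof of Lemma \ref{lem:Seifert1} (that $D_{1}$ is the separating cocore disk of the $1$-handle of $H_{1}$, and that $\partial D_{1}$ and $\partial D_{2}$ meet in two points), producing $C_{i}$ as the core of the annulus complementary to the arc $\partial D_{2}\cap T_{i}$ in each one-holed torus $T_{i}$ of $S-\partial D_{1}$, and then verifying essentiality and non-isotopy by cut-and-paste and a cobounding-annulus argument. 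In fact your curves are isotopic to the paper's longitudes (each $T_{i}$ contains a unique non-peripheral essential curve disjoint from the arc $c_{i}$), so the two proofs produce the same curves by different means; what your version buys is an explicit verification of the two assertions the paper leaves implicit (that the curves are essential in $S$ and mutually non-isotopic), and it works for any pair of disks with $\partial D_{1}$ separating and meeting $\partial D_{2}$ minimally in two points, while the paper's version is shorter and identifies the curves concretely in the fibered picture. Two small points you should tighten rather than assert: the proof of Lemma \ref{lem:Seifert1} records the two intersection points but not minimal position --- efficiency follows because $\partial D_{1}$ is separating (so the geometric intersection number is even) and strong irreducibility rules out intersection number zero, whence no bigon survives; and the statement that the core $C_{i}$ is essential and non-peripheral in $T_{i}$ deserves a line (if $C_{i}$ bounded a disk or were parallel to $\partial T_{i}$, the essential arc $c_{i}$, being disjoint from $C_{i}$, would be forced into a disk or an annulus neighborhood of $\partial T_{i}$ and hence be boundary-parallel, a contradiction). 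Neither point is a genuine gap; with those lines added your argument is complete.
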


\begin{proof}
Let $D_{1}$ and $D_{2}$ be as in Lemma \ref{lem:Seifert1}. It is known that $H_{2}$ is an once punctured torus I-bundle with a non trivial Dehn surgery. Let $C_{1}$ be a longitude in upper boundary and $C_{2}$ be a longitude in lower boundary. Then these two curves satisfy the conclusion.
\end{proof}

\section{ A toroidal manifold with a distance 2 Heegaard splitting}
\label{sec3}
Let $M$ be a compression body with genus $2g-1$ , where $g\geq 2$, and $\partial_{-}M$ be a torus. Then there are
two non-separating spanning annuli $A_{1}$ and $A_{2}$, i.e., the boundary of an essential annulus lies in different components of $\partial M$, such that $\overline{M-A_{1}\cup A_{2}}$ are two handlebodies $V_{1}$ and $V_{2}$ with same genera, see Figure \ref{fig:spanning annulus}.

\begin{figure}[!htbp]
\begin{center}
\includegraphics[scale=2]{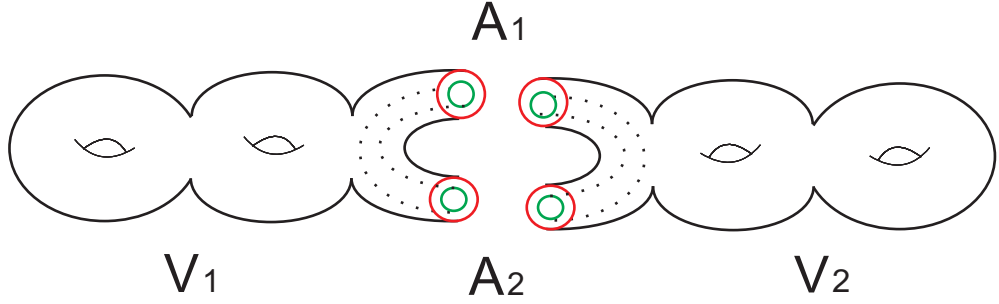}
\caption{Annuli in $M$}
\label{fig:spanning annulus}
\end{center}
\end{figure}

From Figure \ref{fig:spanning annulus}, $\partial V_{1}$ (resp. $\partial V_{2}$) consists of $S_{1}$ and an annulus $A_{1}^{1}$ (resp. $S_{2}$ and an annulus $A_{2}^{1}$). Since $S_{1}$ and $S_{2}$ are homeomorphic, there is a orientation reversing homeomorphism $f: S_{1}\rightarrow S_{2}$ such that
$f(\partial A_{1}^{1}\cap S_{1})=\partial A_{2}^{1}\cap S_{2}$.

Since $S_{1}$ is a genus $g-1\geq 1$ surface with two boundary,
the Projective Measured Lamination Space of $S_{1}$ \[\mathcal {PML}(S_{1})\cong S^{6g-9}\]
is not empty.
It is known that the isotopy class of the boundary $C\subset S_{1}$ of an essential disk in $V_{1}$ is an element of $\mathcal{PML}(S_{1})$. Then the collection of all essential simple closed curves bounding disks is a subset of $\mathcal{PML}(S_{1})$. It is known that the intersection function on $ML(S)$ defined a weak$^{*}$-topology on $ML(S)$, see \cite{penner}. Then there is a topology defined on $PML(S)$ induced by the projection $P:ML(S)\rightarrow PML(S)$. Under this topology, let $\mathcal {DS}_{1}\subset {PML}(S_{1})$ be the closure of all essential simple closed curves in $S_{1}$ which bound disks in $V_{1}$. So is $\mathcal {DS}_{2}$.
By the symmetry of these two handlebodies $V$ and $V_{2}$, there is an automorphism of $h:S_{1}\rightarrow S_{1}$ such that $h\circ f(\mathcal {DS}_{2})\subset \mathcal {DS}_{1}$.

\begin{fact}
$\mathcal {DS}_{1}$ is nowhere dense.
\label{fact:3.1}
\end{fact}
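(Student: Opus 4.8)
The plan is to show that $\mathcal{DS}_1$ is a closed set with empty interior inside $\mathcal{PML}(S_1) \cong S^{6g-9}$; since it is closed by construction, it suffices to prove that its interior is empty, equivalently that its complement is dense. First I would recall the standard structure of the disk set: the essential simple closed curves in $\partial_+ V_1 = S_1$ bounding disks in the handlebody $V_1$ form a subset whose closure $\overline{\mathcal{D}(V_1)}$ in $\mathcal{PML}(S_1)$ is exactly the set of projective measured laminations that are limits of meridians. The key classical input (Masur, and Kerckhoff) is that the limit set of the handlebody group acting on $\mathcal{PML}(S_1)$ — equivalently the closure of the disk set — has measure zero and, more to the point here, empty interior: a measured lamination carried by the ``disk set'' cannot fill the surface, whereas filling laminations are dense in $\mathcal{PML}(S_1)$. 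I would cite this in the form: the set of uniquely ergodic, filling, minimal laminations is dense in $\mathcal{PML}(S_1)$, and no such lamination lies in $\mathcal{DS}_1$ because a disk-bounding curve is disjoint from its own compressing disk's co-core slope and hence the limiting laminations remain non-filling (each has a complementary component that is not a disk or once-punctured disk, or is disjoint from a fixed meridian's projection).

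The main steps, in order, are: (1) record that $\mathcal{DS}_1$ is closed by definition, so ``nowhere dense'' reduces to ``empty interior''; (2) observe that every curve in $\mathcal{D}(V_1)$ fails to fill $S_1$ — indeed any two disjoint meridians, which always exist since $g-1 \ge 1$ gives $V_1$ positive genus with extra handles, are disjoint non-isotopic curves, so the disk set is not ``large'' in the Masur--Minsky sense and its closure consists of non-filling laminations; (3) invoke density of filling (e.g.\ uniquely ergodic) laminations in $\mathcal{PML}(S_1)$, which holds because $S_1$ is a genus $g-1 \ge 1$ surface with two boundary components and hence $\mathcal{PML}(S_1)$ is a sphere of dimension $6g-9 \ge 3$ on which the mapping class group acts with dense pseudo-Anosov (filling) fixed points; (4) conclude that every open set of $\mathcal{PML}(S_1)$ meets the complement of $\mathcal{DS}_1$, so $\mathrm{int}(\mathcal{DS}_1) = \emptyset$.

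I expect the main obstacle to be pinning down precisely which ``disk set is not everything'' statement to quote and making sure it applies to a surface with boundary (here $S_1$ has two boundary circles, which are the traces of the spanning annuli $A_1, A_2$). One has to be slightly careful that $\mathcal{PML}(S_1)$ is taken with respect to essential curves/laminations and that the boundary-parallel curves are excluded; with that convention the handlebody disk set still omits all filling laminations. An alternative, more self-contained route avoiding heavy citations: argue directly that the complement of $\mathcal{DS}_1$ contains the (open, dense) set of laminations that intersect a fixed meridian $\partial D_0$ transversally with positive intersection number — since such a lamination cannot be a limit of meridians all of which can be taken disjoint from some meridian disjoint from $D_0$ — but the cleanest writeup is to cite the known fact that $\overline{\mathcal{D}(V_1)}$ has empty interior (it is even a proper closed subset of measure zero), which is exactly what is needed.
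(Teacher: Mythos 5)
There is a genuine gap at the heart of your plan, namely step (2). From the (true) observation that every meridian of $V_{1}$ fails to fill $S_{1}$ you conclude that the closure $\mathcal{DS}_{1}$ ``consists of non-filling laminations'' and hence misses every filling (uniquely ergodic) lamination. That inference is invalid: limits in $\mathcal{PML}(S_{1})$ of non-filling curves are very often filling --- indeed \emph{every} filling lamination is a limit of simple closed curves, each of which is non-filling. Worse, the conclusion itself is false for disk sets: whenever a pseudo-Anosov (or partial pseudo-Anosov supported in $S_{1}$) homeomorphism extends over the handlebody, the images $\phi^{n}(m)$ of a meridian are meridians converging to its attracting lamination, which is filling and uniquely ergodic and lies in $\mathcal{DS}_{1}$. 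So the strategy ``the complement contains all filling laminations, which are dense'' cannot work; nowhere density of the disk set is \emph{not} because it avoids filling laminations. Your ``more self-contained alternative'' fails for the same reason: the set of laminations meeting a fixed meridian $\partial D_{0}$ essentially is certainly not disjoint from $\mathcal{DS}_{1}$, since most meridians intersect $\partial D_{0}$. Finally, quoting Masur's (or Kerckhoff's) theorem off the shelf is not quite enough either, because those results concern the full boundary $\partial V_{1}$, whereas here $S_{1}$ is a proper subsurface (the complement of an annulus) and one only considers meridians contained in $S_{1}$; this is exactly why the paper states that it rewrites Masur's proof rather than citing it.

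For comparison, the paper's argument (following Masur) is finer than a filling/non-filling dichotomy. It fixes a disk system $\Gamma$ cutting $S_{1}$ into pairs of pants, notes that every meridian not in $\Gamma$ contains a wave with respect to $\Gamma$, and uses Penner--Harer to produce a minimal lamination $\mathcal{L}$ carried by a maximal birecurrent train track that intersects all such waves; approximating $\mathcal{L}$ by curves gives a curve $c_{N+1}$ with an open neighborhood $U$ disjoint from $\mathcal{DS}_{1}$. Then, to rule out interior points, it assumes an open set $U'\subset\mathcal{DS}_{1}$ and transports $U$ into $U'$ using maps that preserve $\mathcal{DS}_{1}$ (a homeomorphism of the handlebody and high powers of the Dehn twist along a meridian $\alpha$, which accumulate any transverse curve onto $\alpha$), obtaining a contradiction. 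This equivariance/twisting step is essential precisely because $\mathcal{DS}_{1}$ does contain filling laminations, so no pointwise criterion of the kind you propose can separate it from open sets; if you want to salvage your write-up you would need to reproduce an argument of this type (or carefully verify that Masur's proof adapts verbatim to the bounded subsurface $S_{1}$), not merely cite density of filling laminations.
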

\begin{note}The proof is based on and contained in the proof of Theorem 1.2 \cite{masur}. For the integrity of this paper, we use the theory of Measured Lamination Space and rewrite it  here.
\end{note}

Before proving Fact \ref{fact:3.1},  we introduce a definition as follows. For any essential simple closed curve $\alpha\subset S_{1}$ bounding an essential disk in $V_{1}$, there is an disk system $\Gamma$ in $S_{1}$ such that

(1) one of its vertices is $\alpha$;

(2) all of its vertices are the isotopy classes of the boundaries of pairwise disjoint non-isotopic essential disks in $V_{1}$;

(3) it splits $S_{1}$ into a collection of  pairs of pants.

\begin{proof}
All  we need to prove is $\mathcal {DS}_{1}$ contains no open set in $\mathcal{PML}(S_{1})$.

Choosing an element $\alpha \in \mathcal {DS}_{1}$ represented by an essential non-separating simple closed curve in $S_{1}$, by above argument, there is
a disk system $\Gamma$ in $S_{1}$. For any element $\beta \in \mathcal {DS}_{1}$ represented by an essential simple closed curve in $S_{1}$,  by Lemma \ref{lem:bigon criterion}, we can isotope $\beta$ such that the intersection number $\mid \beta\cap \Gamma\mid$ is minimal. If $\beta$ intersects
$\Gamma$ nonempty, then there is an wave $w$ corresponding to the outermost disk component in the complement of $\Gamma$ in $S_{1}$. Since the boundary of $\partial S_{1}$ bounds no essential disk in $V_{1}$, the wave $w$ is contained in a pair of pants bounded by the boundaries of
essential disks. If $\beta$ intersects $\Gamma$ empty, then $\beta\in \Gamma$.

From Penner and Harer \cite{penner}, there is always a birecurrent maximal train track $\tau$ in $S_{1}$ such that it intersects all the wave like $w$ for the disk system $\Gamma$. Then there is a minimal measured lamination $\mathcal {L}$ carried by $\tau$  intersecting all the wave like $w$ such that the complement of it in $S_{1}$ is a disk or a one-holed disk with a finite points removed from its boundary, where the one holed disk contains one boundary of $S_{1}$. Then $\mathcal {L}$ is not in $\mathcal {DS}_{1}$ because it intersects each element  in $\mathcal {DS}_{1}$ non empty.

It is  known that the collection of essential simple closed curves in $S_{1}$ is dense in $\mathcal {PML}(S_{1})$. Then there is a sequence
$\{c_{1},..., c_{n},...\}$  converging to $\mathcal {L}$ in $\mathcal {PML}(S_{1})$, where $c_{i}$ is represented by an essential simple closed curve. Hence there is a number $N$ such that $c_{N+1}$ intersects all the waves like $w$ for the disk system $\Gamma$. So there is a neighborhood $U$ of $c_{N+1}$ in $\mathcal {PML}(S_{1})$ disjoint from  $\mathcal {DS}_{1}$ in $\mathcal {PML}(S_{1})$.

Now suppose that there is an open set $U^{'}\subset \mathcal {DS}_{1}$.
Then there is an automorphism $f:S_{1}\rightarrow S_{1}$, where $f(\mathcal {DS}_{1})=\mathcal {DS}_{1}$,
and a non separating essential curve $\alpha_{1} \in U^{'}$ bounding disk in $V_{1}$ such that
$f(\alpha_{1})=\alpha$ and $f(U^{'})\subset \mathcal {DS}_{1}$ is an open neighborhood of $\alpha$ in $\mathcal {PML}(S_{1})$.

For each essential simple closed curve $c\subset S_{1}$ which intersects $\alpha$ nonempty,
let $\tau_{\alpha}$ be the Dehn twist along $\alpha$ in $S_{1}$. It is known that $\tau^{n}_{\alpha}(c)$ is closed to $\alpha$ in $\mathcal {PML}(S_{1})$.
Then $\tau^{n}_{\alpha}(c_{N+1}))\subset f(U^{'})$ for $n$ large enough. Hence there is an open subset $U_{1}\subset U$ such that
$\tau^{n}_{\alpha}(U_{1})\subset f(U^{'})$. It means that $f^{-1}\circ \tau^{n}_{\alpha}(U_{1})\subset U^{'}$. Then $\tau^{-n}_{\alpha}\circ f(\mathcal {DS}_{1})\neq \mathcal {DS}_{1}$.
 But since  $\alpha$ bounds an essential disk in $V_{1}$, both of these two maps $\tau_{\alpha}$ and $\tau_{\alpha}^{-1}$ map
 $\mathcal{DS}_{1}$ into $\mathcal {DS}_{1}$. Hence $\tau_{\alpha}^{-n}\circ  f$ maps $\mathcal {DS}_{1}$ into
 $\mathcal {DS}_{1}$. A contradiction.

\end{proof}

Since the collection of those stable and unstable laminations of all pseudo anosov automorphisms in $S_{1}$ is dense in $\mathcal {PML}(S_{1})$,  there is a pseudo anosov map $g$ in $S_{1}$ such that the stable lamination are not in  $\mathcal {DS}_{1}$. By the proof of Theorem 2.7 \cite{h01}, if $n$ is large enough, then $d_{\mathcal {C}(S_{1})}(g^{n}(\alpha), h\circ f(\beta))\geq 11$ for any $\alpha$ and $\beta$ bounding essential disks in $V_{1}$ and $V_{2}$ respectively.

 For constructing  a non hyperbolic 3-manifold, we set $M_{1}$ be $V_{2}\cup_{g^{n}\circ h\circ f} V_{1}$ along $S_{2}$ and $S=\partial V_{1}$ in $M_{1}$. After pushing $S$  a little into the interior of $M$, $S$ splits $M_{1}$ into a handlebody $V$ and a compression body $W$, see Figure \ref{fig:Heegaardsurface2}.

\begin{figure}[!htbp]
\begin{center}
\includegraphics[scale=2]{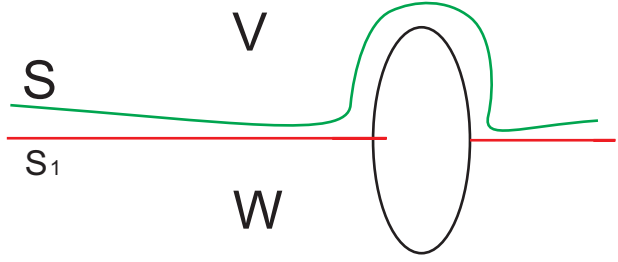}
\caption{Heegaard surface $S$}
\label{fig:Heegaardsurface2}
\end{center}
\end{figure}
Note: $S$ is colored in green and $S_{1}$ is colored in red, where $S$ is parallel to the union of $S_{1}$ and an annulus $A\subset \partial M_{1}$.

A Heegaard splitting is weakly reducible if there are a pair of essential disks from different sides of the Heegaard surface so that their boundaries intersect empty. Otherwise, the Heegaard splitting is strongly irreducible.
 \begin{fact}
 The Heegaard splitting $V\cup_{S}W$ is strongly irreducible.
 \label{fact3.2}
 \end{fact}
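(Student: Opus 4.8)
The plan is to argue by contradiction: suppose $V\cup_S W$ is weakly reducible, so there are essential disks $D\subset V$ and $E\subset W$ with $\partial D\cap\partial E=\emptyset$ in $S$. Recall from the construction that $S$ is obtained by pushing $S_1=\partial V_1\cap(\text{genus part})$ slightly into $M_1$, so $S$ is parallel in $M_1$ to $S_1\cup A$ for an annulus $A\subset\partial M_1$; equivalently $S$ is the Heegaard surface obtained from the amalgamation $V_2\cup_{g^n\circ h\circ f}V_1$ along $S_2$. Under this identification, an essential disk in the handlebody $V$ projects (via the product structure away from $\partial_- M_1$) to an essential disk or essential curve in $S_1$ bounding a disk in $V_1$, and likewise an essential disk in the compression body $W$ corresponds either to an essential disk bounded in $V_2$ (carried over to $S_1$ by $g^n\circ h\circ f$) or to the curve $\partial A\subset S_1$, which is $\partial S_1$ and bounds no essential disk. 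So the first step is to make this translation precise: a weakly reducing pair $(\partial D,\partial E)$ yields curves $a,b\subset S_1$ with $a$ bounding a disk in $V_1$, $b$ bounding a disk in $V_2$ (pulled back to $S_1$), and $d_{\mathcal C(S_1)}(a, g^n\circ h\circ f(b))\le 1$, because disjoint curves on $S$ restrict to curves on $S_1$ at distance at most $1$ (using that the annular piece $A$ contributes nothing essential).

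Once that translation is in place, the contradiction is immediate from the paragraph preceding the statement: we chose $n$ large enough that $d_{\mathcal C(S_1)}(g^n(\alpha'), h\circ f(\beta'))\ge 11$ for every $\alpha'$ bounding an essential disk in $V_1$ and every $\beta'$ bounding an essential disk in $V_2$. (Here I should double–check the direction in which the gluing map and the power of $g$ are composed — in the statement of the distance estimate it is $g^n(\alpha)$ versus $h\circ f(\beta)$, and in the definition of $M_1$ it is the map $g^n\circ h\circ f$, so one must be careful that $d_{\mathcal C(S_1)}(a,\, g^n\circ h\circ f(b)) = d_{\mathcal C(S_1)}(g^{-n}(a),\, h\circ f(b))$, and $g^{-n}(a)$ still bounds an essential disk in $V_1$ since $g$ is a homeomorphism of $S_1$; alternatively rephrase the earlier estimate symmetrically.) Either way, $d_{\mathcal C(S_1)}\ge 11 > 1$, contradicting the weak reducibility. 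Hence $V\cup_S W$ is strongly irreducible.

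The main obstacle I expect is the bookkeeping in the translation step: one must verify that pushing $S_1$ off the boundary and absorbing the spanning annuli $A_1, A_2$ does not create new compressing disks with small-intersection boundaries that are invisible on $S_1$. Concretely, an essential disk in $W$ could a priori be one of the two meridian disks coming from the solid-torus structure near $\partial_- M$, whose boundary might meet $\partial A_i$; one has to check that any such disk either is isotopic into the genus part (and hence descends to a disk in $V_2$) or has boundary isotopic to the inessential curve $\partial S_1$. This is where the hypothesis that $\partial_- M$ is a single torus and $A_1, A_2$ are \emph{non-separating} spanning annuli is used: it forces $W$ to be a genuine compression body with $\partial_- W=\partial_- M$ a torus, so every essential disk of $W$ is, after isotopy, disjoint from a complete meridian system that is compatible with the product structure, and therefore restricts to a genuine disk-bounding curve of $V_2$ on $S_1$. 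Granting that structural fact (which is standard for amalgamated Heegaard surfaces along an incompressible torus), the rest is just the distance inequality above.
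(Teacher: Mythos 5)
There is a genuine gap, and it sits exactly at the step you flag as ``bookkeeping'': the claim that a weakly reducing pair $(\partial D,\partial E)$ restricts to curves $a,b\subset S_{1}$ bounding disks in $V_{1}$ and $V_{2}$ with $d_{\mathcal C(S_{1})}(a,\,g^{n}\circ h\circ f(b))\le 1$. An essential disk in the genus $2g-1$ handlebody $V$ (or in the compression body $W$) has no reason to have boundary contained in the copy of $S_{1}$ sitting inside $S$: its boundary may cross the annulus $A$ (i.e.\ the curves parallel to $\partial S_{1}$) arbitrarily many times, so its trace on $S_{1}$ is a collection of arcs, not a disk-bounding curve of $V_{1}$ or $V_{2}$. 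Your proposed structural fix --- that every essential disk of $W$ can be isotoped off a complete meridian system compatible with the product structure and hence ``descends'' to a disk of $V_{2}$ --- is false; essential disks of a handlebody or compression body cannot in general be isotoped off a fixed meridian system, and this is precisely the difficulty the paper's argument is built to handle. A symptom that the naive translation cannot be right is quantitative: if disjointness on $S$ really forced distance at most $1$ between disk-bounding curves on $S_{1}$, then requiring $d_{\mathcal C(S_{1})}\ge 3$ in the construction would already suffice, and the hypothesis $\ge 11$ would be pointless.

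The paper's proof instead works with the subsurface $S_{1,1}=S_{1}-N(\partial S_{1})$, observes that it is a \emph{compressible hole} for both $V$ and $W$, and invokes the Masur--Schleimer projection bound (Lemma \ref{lem:compressible hole}): from the arbitrary disks $D$ and $E$ one gets new essential disks $D_{1}\subset V$ and $E_{1}\subset W$ with $\partial D_{1},\partial E_{1}\subset S_{1,1}$ and with $d_{\mathcal C(S_{1,1})}(\pi_{S_{1,1}}(a),\partial D_{1})\le 3$, $d_{\mathcal C(S_{1,1})}(\pi_{S_{1,1}}(b),\partial E_{1})\le 3$ for suitable arcs $a\subset\partial D\cap S_{1,1}$, $b\subset\partial E\cap S_{1,1}$; the disjointness $\partial D\cap\partial E=\emptyset$ only enters through Lemma \ref{lem:subsurface projection}, giving $d_{\mathcal C(S_{1,1})}(\pi_{S_{1,1}}(a),\pi_{S_{1,1}}(b))\le 2$. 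The triangle inequality then yields $d_{\mathcal C(S_{1})}(\partial D_{1},\partial E_{1})\le 8$, and since $D_{1}$ and $E_{1}$, having boundary in $S_{1,1}$, genuinely are disks of $V_{1}$ and (via the gluing) of $V_{2}$, this contradicts the choice $d_{\mathcal C(S_{1})}(g^{n}(\alpha),h\circ f(\beta))\ge 11$. So the missing idea in your proposal is exactly this compressible-hole/subsurface-projection machinery; without it the central translation step does not go through. (Your side remark about composing $g^{n}$ with $h\circ f$ is a harmless normalization and not where the difficulty lies.)
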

\begin{proof}
Suppose not.  Then the Heegaard splitting is weakly reducible. So there are a pair of essential disks $D\subset V$ and $E\subset W$ so that
$\partial D\cap \partial E=\emptyset$. From the construction of $M_{1}$,  $A$ is incompressible in $M_{1}$.
Let $S_{1,1}\subset S_{1}$ be $S_{1}-N(\partial S_{1})$, where $N(\partial S_{1})$ is a regular neighborhood of $\partial S_{1}$ in $S_{1}$.
After pushing  the closure of $A\cup N(\partial S_{1})$ a little into $M_{1}$ so that it is disjoint from $S_{1,1}$, $\overline{A\cup N(\partial S_{1})}$ is turned into an embedded annulus $A_{1,1}$. Then $S$ is isotopic to $S_{1,1}\cup A_{1,1}$.
It is not hard to see that every essential disk of $V$ (resp. $W$) has the property that its boundary cuts
$S_{1,1}$. It means that $S_{1,1}$ is a hole for both of $V$ and $W$. By the construction of $V\cup_{S}W$,  there is  an essential disk in $V$ (resp. $W$) with its boundary in $S_{1,1}$. Then $S_{1,1}$ is a compressible hole for both of $V$ and $W$.

By Lemma \ref{lem:compressible hole}, for the essential disk $D$, there is an essential disk $D_{1}\subset V$ such that

(1) $\partial D_{1}\subset S_{1,1}$;

(2) there is one component $a\subset \partial D\cap S_{1,1}$ such that $d_{\mathcal {C}(S_{1,1})}(\pi_{S_{1,1}}(a), \partial D_{1,1})\leq 3$;

Similarly for the essential disk $E$,  there is an essential disk $E_{1}\subset W$ such that

(1) $\partial E_{1}\subset S_{1,1}$;

(2) there is one component $b\subset \partial E \cap S_{1}$ such that $d_{\mathcal {C}(S_{1,1})}(\pi_{S_{1,1}}(b), \partial E_{1})\leq 3$;

Since $\partial D\cap \partial E=\emptyset$, by Lemma \ref{lem:subsurface projection}, $d_{\mathcal {C}(S_{1,1})}(\pi_{S_{1,1}}(a), \pi_{S_{1,1}}(b))\leq 2$. By triangle inequality, \[d_{\mathcal {C}(S_{1,1})}(\partial D_{1}, \partial E_{1})\leq 8 .\]
Since $S_{1,1}$ is an  essential subsurface of $S_{1}$, every essential simple closed curve in $S_{1,1}$ is an essential simple closed curve in
$S_{1}$. Then \[d_{\mathcal {C}(S_{1})}(\partial D_{1}, \partial E_{1})\leq 8 .\]
Since $S_{1}\subset \partial V_{1}$, it is not hard to see that $D_{1}$ is also an essential disk in $V_{1}$. So is the disk $E_{1}$. Then the inequality above implies that
\[d_{\mathcal {C}(S_{1})}(g^{n}(\alpha), h\circ f(\beta))\leq 8 ,\]
for some pair of  $\alpha$ and $\beta$ bounding essential disks in $V_{1}$ and $V_{2}$ respectively.

It contradicts the assumption of  $M_{1}$.
\end{proof}

 It is known that every Heegaard splitting of a boundary reducible 3-manifold is weakly reducible. Then the torus boundary $T^{2}_{1}$ of $M_{1}=V_{1}\cup_{f\circ g^{n}}V_{2}$ is incompressible.

Let $ST_{1}$ and $ST_{2}$ be two solid torus. Let $A_{1}^{2}\subset \partial ST_{1}$ be an incompressible annulus so that the core circle of $A_{1}^{2}$ intersects the meridian circle in at least two points up to isotopy. Similarly, choose an annulus $A_{2}^{2}$ in the boundary $ST_{2}$. After gluing $ST_{1}$ and $ST_{2}$ together along a homeomorphism between $A_{1}^{2}$ and $A_{2}^{2}$, the resulted manifold $M_{2}$ is a small Seifert 3-manifold with only one torus boundary $T^{2}_{2}$, where $T^{2}_{2}$ is incompressible.

Let $h_{1}:T^{2}_{1}\rightarrow T^{2}_{2}$ be a homeomorphism such that
$h_{1}(\partial S_{1})=\partial A_{1}^{2}$. Then $M^{*}=M_{1}\cup_{h_{1}}M_{2}$ is closed and $T^{2}_{2}$ is incompressible in $M^{*}$.

Let $S^{*}=S_{1}\cup A_{1}^{2}$.
Then $S^{*}$ splits $M^{*}$ into two 3-manifolds, denoted by $V^{*}$ and $W^{*}$ respectively.
In this case, $V^{*}$ is an amalgamation of $V_{1}$ and a solid torus $ST_{1}$ along the annulus $\partial V_{1}-S_{1}$. Then there are disjoint essential disks cutting $V^{*}$ into some 3-balls. So $V^{*}$ is a genus $g$ handlebody. Similarly, $W^{*}$ is a  genus $g$ handlebody too.
Hence $V^{*}\cup_{S^{*}}W^{*}$ is a Heegaard splitting of $M^{*}$.
\begin{fact}
$V^{*}\cup_{S^{*}}W^{*}$ is a distance 2 genus $g$ Heegaard splitting.
\label{fact:3.4}
\end{fact}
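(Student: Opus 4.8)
The genus assertion is already in hand: $V^{*}$ and $W^{*}$ are genus $g$ handlebodies, so $V^{*}\cup_{S^{*}}W^{*}$ is a genus $g$ Heegaard splitting, and what remains is to show that its distance is exactly $2$, i.e.\ to prove that it has distance at most $2$ and at least $2$. For the upper bound I would invoke the essential torus $T^{2}_{2}\subset M^{*}$: since $T^{2}_{2}$ is incompressible by construction, the theorem of Hempel \cite{h01} (also Hartshorn \cite{ha} and Scharlemann \cite{sch01}) that every Heegaard splitting of a $3$-manifold containing an essential torus has distance at most two immediately gives that $V^{*}\cup_{S^{*}}W^{*}$ has distance at most $2$. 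One can also see this directly: the essential simple closed curve $\gamma_{0}\subset S^{*}$ coming from a component of $S^{*}\cap T^{2}_{2}=\partial S_{1}=\partial A_{1}^{2}$ bounds no disk in $M^{*}$ (incompressibility of $T^{2}_{2}$), yet is disjoint from an essential disk of $V_{1}\subset V^{*}$ with boundary in the interior of $S_{1}$ and from an essential disk of $V_{2}\subset W^{*}$ with boundary in the interior of $S_{1}$; this is a path of length $2$.

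The substance is the lower bound, which amounts to showing $V^{*}\cup_{S^{*}}W^{*}$ is strongly irreducible. The plan is to repeat the argument of Fact~\ref{fact3.2} almost verbatim, with $S_{1}\subset S^{*}$ now playing the role that $S_{1,1}$ played there. Assume for contradiction there is a weakly reducing pair, i.e.\ essential disks $D\subset V^{*}$ and $E\subset W^{*}$ with $\partial D\cap\partial E=\emptyset$. The crux is to establish that $S_{1}$ is a compressible hole for both $V^{*}$ and $W^{*}$. Compressibility is immediate, since $V_{1}\subset V^{*}$ and $V_{2}\subset W^{*}$ contain essential disks with boundary in the interior of $S_{1}$.

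The step I expect to be the main obstacle is the hole property, namely that the boundary of every essential disk of $V^{*}$ (and of $W^{*}$) cuts $S_{1}$. Here $\overline{S^{*}-S_{1}}$ is the single annulus $A_{1}^{2}$, so a disk boundary failing to cut $S_{1}$ would be isotopic in $S^{*}$ to the core of $A_{1}^{2}$, hence, on $\partial ST_{1}$, to a curve parallel to the core of the gluing annulus $A_{1}^{1}$; since that core meets the meridian of $ST_{1}$ at least twice it bounds no disk in $ST_{1}$, and an innermost-disk argument using the irreducibility of $V^{*}=V_{1}\cup_{A_{1}^{1}}ST_{1}$ reduces the question to whether the core of $A_{1}^{1}$ bounds a disk in $V_{1}$, which it cannot: otherwise $V^{*}$ would contain a punctured lens space, contradicting that it is a handlebody. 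The same reasoning handles $W^{*}$ and $ST_{2}$; this is also exactly the input needed to push disks off the attached solid tori in the next step.

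Granting that $S_{1}$ is a compressible hole on both sides, the rest follows the template of Fact~\ref{fact3.2}: Lemma~\ref{lem:compressible hole} applied to $D$ yields an essential disk $D_{1}\subset V^{*}$ with $\partial D_{1}\subset S_{1}$ and $d_{\mathcal{C}(S_{1})}(\pi_{S_{1}}(\partial D),\partial D_{1})\le 3$, and applied to $E$ yields $E_{1}\subset W^{*}$ with $\partial E_{1}\subset S_{1}$ and $d_{\mathcal{C}(S_{1})}(\pi_{S_{1}}(\partial E),\partial E_{1})\le 3$; since $\partial D\cap\partial E=\emptyset$, Lemma~\ref{lem:subsurface projection} gives $d_{\mathcal{C}(S_{1})}(\pi_{S_{1}}(\partial D),\pi_{S_{1}}(\partial E))\le 2$, so the triangle inequality yields $d_{\mathcal{C}(S_{1})}(\partial D_{1},\partial E_{1})\le 8$. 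Because $\partial D_{1}$ and $\partial E_{1}$ lie in the interior of $S_{1}$, innermost-disk isotopies push $D_{1}$ off $ST_{1}$ and $E_{1}$ off $ST_{2}$, so $D_{1}$ is an essential disk of $V_{1}$ and $E_{1}$ is an essential disk of $V_{2}$; transporting through the gluing map of $M_{1}$ exactly as in Fact~\ref{fact3.2} then produces a pair $\alpha,\beta$ bounding essential disks in $V_{1}$ and $V_{2}$ with $d_{\mathcal{C}(S_{1})}(g^{n}(\alpha),h\circ f(\beta))\le 8$, contradicting the choice of $n$, for which this distance is at least $11$. Hence $V^{*}\cup_{S^{*}}W^{*}$ is strongly irreducible, so its distance is at least $2$, and combined with the upper bound it is exactly $2$.
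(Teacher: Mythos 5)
Your proof is correct and takes essentially the same route as the paper: distance at most $2$ via compression disks of $S_{1}$ on both sides disjoint from $\partial A_{1}^{2}$, and distance at least $2$ by showing $S_{1}$ is a compressible hole for both $V^{*}$ and $W^{*}$ and then running Lemma \ref{lem:compressible hole}, Lemma \ref{lem:subsurface projection} and the triangle inequality to get a distance at most $8$, contradicting the choice of $n$ giving distance at least $11$. The only difference is that you supply the details (the hole property via the punctured lens space argument, and the innermost-disk isotopies pushing $D_{1}$ and $E_{1}$ into $V_{1}$ and $V_{2}$) which the paper leaves as ``not hard to see'' and ``after some isotopy''.
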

\begin{proof}
Since $\partial S_{1}$ is essential in $S^{*}$, every compression disk of $S_{1}$ in $M_{1}$ is an essential disk of $S^{*}$.
Since there are compression disks of $S_{1}$ in two sides, there are essential disks in $V^{*}$ and $W^{*}$ disjoint from $\partial A_{1}^{2}$ in $S^{*}$. Hence the Heegaard splitting $V^{*}\cup_{S^{*}}W^{*}$ has the distance less than or equal to two.

Suppose the Heegaard splitting $V^{*}\cup_{S^{*}}W^{*}$ has distance less or equal to one. Then there are two essential disks $D\subset V^{*}$ and $E\subset W^{*}$ so that $\partial D$ is disjoint from $\partial E$. It is not hard to see $S_{1}$ is a compressible hole for both $V^{*}$ and $W^{*}$.
By Lemma \ref{lem:compressible hole}, for the essential disk $D$, there is an essential disk $D_{1}\subset V^{*}$ such that

(1) $\partial D_{1}\subset S_{1}$;

(2) there is one component $a\subset \partial D\cap S_{1}$ such that $d_{\mathcal {C}(S_{1})}(\pi_{S_{1}}(a), \partial D_{1})\leq 3$;

Since $\partial S_{1}$ bounds an essential annulus in $V^{*}$, after some isotopy, $D_{1}$ is a compression disk for $S_{1}$ in $M_{1}$.

Similarly for the essential disk $E$,  there is an essential disk $E_{1}\subset W^{*}$ such that

(1) $\partial E_{1}\subset S_{1}$;

(2) there is one component $b\subset \partial E \cap S_{1}$ such that $d_{\mathcal {C}(S_{1})}(\pi_{S_{1}}(b), \partial E_{1})\leq 3$;

(3) $E_{1}$ is a compression disk for $S_{1}$ in $M_{1}$.

Since $\partial D\cap \partial E=\emptyset$, by Lemma \ref{lem:subsurface projection}, $d_{\mathcal {C}(S_{1})}(\pi_{S_{1}}(a), \pi_{S_{1}}(b))\leq 2$. By triangle inequality, \[d_{\mathcal {C}(S_{1})}(\partial D_{1}, \partial E_{1})\leq 8 .\]
It contradicts the assumption of $M_{1}$.






\end{proof}

By Fact \ref{fact:3.4}, $M^{*}$ admits a distance 2, genus $g$ Heegaard splitting.  Furthermore, it contains an essential torus.  Then there is a free abelian  subgroup $Z^{2}$ in its fundamental group. So $M^{*}$ is not hyperbolic.
\section{Proof of Theorem \ref{thm1}}
\label{sec4}

By the definition of Heegaard distance, for a distance 2, genus at least 2 Heegaard splitting $V^{*}\cup_{S^{*}}W^{*}$, there are three essential simple closed curves
$\{\alpha, \gamma^{*}, \beta\}$ so that $\alpha\cap \gamma^{*}=\emptyset, \gamma^{*}\cap \beta=\emptyset$ and $\alpha$ (resp. $\beta$) bounds an essential disk in $V$ (resp. $W$). Set \[\mathcal {G}=\{\alpha, \gamma^{*}, \beta\}.\] Then it is a geodesic in $\mathcal {C}(S)$ and realizes the Heegaard distance.

For the Heegaard splitting $V^{*}\cup_{S^{*}}W^{*}$, there maybe many geodesics in $\mathcal {C}(S^{*})$ realizing the Heegaard distance. Moreover it is unknown that whether all the geodesics have a common vertex or not. But for the non hyperbolic example $M^{*}=V^{*}\cup_{S^{*}} W^{*}$ in Section \ref{sec3}, there is an essential non-separating simple closed curve $\gamma^{*}$ such that for any pair of essential simple closed curves $\alpha$ and $\beta$ disjoint from $\gamma^{*}$ bounding essential disks in $V^{*}$ and $W^{*}$ respectively, $d_{\mathcal {C}(S_{\gamma^{*}})}(\alpha, \beta)\geq 11$.
\begin{fact}
Every geodesic realizing the distance of $V^{*}\cup_{S^{*}} W^{*}$ has $\gamma^{*}$ as one of its vertices.
\label{fact:4.1}
\end{fact}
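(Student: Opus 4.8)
The plan is to show that in any geodesic $\{\alpha',\gamma',\beta'\}$ of $\mathcal{C}(S^{*})$ realizing the Heegaard distance --- where $\alpha'$ bounds an essential disk in $V^{*}$, $\beta'$ bounds an essential disk in $W^{*}$, and $\gamma'$ is disjoint from both --- the middle vertex $\gamma'$ must equal $\gamma^{*}$. First I would record two properties of $\gamma^{*}$ coming from the construction. (i) $\gamma^{*}$ bounds no essential disk in $V^{*}$ or in $W^{*}$: otherwise, since on each side of $S_{1}$ in $M_{1}$ there is a compression disk whose boundary can be isotoped into the interior of $S_{1}$ and hence made disjoint from $\partial S_{1}=\gamma^{*}$, the splitting $V^{*}\cup_{S^{*}}W^{*}$ would have distance at most one, contradicting Fact \ref{fact:3.4}. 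In particular $\gamma^{*}\neq\alpha'$ and $\gamma^{*}\neq\beta'$, so it suffices to prove $\gamma'=\gamma^{*}$. (ii) $S_{\gamma^{*}}=\overline{S^{*}-\gamma^{*}}$ is isotopic in $S^{*}$ to $S_{1}$ --- cutting $S^{*}=S_{1}\cup A_{1}^{2}$ along a core of $A_{1}^{2}$ returns $S_{1}$ with two collar annuli attached --- and, by the argument in the proof of Fact \ref{fact:3.4}, $S_{\gamma^{*}}$ is a compressible hole for both $V^{*}$ and $W^{*}$.

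Now suppose, for contradiction, that a geodesic $\{\alpha',\gamma',\beta'\}$ realizing the Heegaard distance has $\gamma'\neq\gamma^{*}$. The complement of $S_{\gamma^{*}}$ in $S^{*}$ is an annular neighborhood of $\gamma^{*}$, whose only essential curve is $\gamma^{*}$; since $\gamma'$ is essential and not isotopic to $\gamma^{*}$, it cannot be isotoped off $S_{\gamma^{*}}$, i.e.\ $\gamma'$ cuts $S_{\gamma^{*}}$. Moreover $\alpha'$ and $\beta'$ cut $S_{\gamma^{*}}$, because $S_{\gamma^{*}}$ is a hole for $V^{*}$ (resp.\ $W^{*}$) and $\alpha'$ (resp.\ $\beta'$) bounds an essential disk there. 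Hence $\{\alpha',\gamma',\beta'\}$ is a length-two geodesic in $\mathcal{C}(S^{*})$ all of whose vertices cut $S_{\gamma^{*}}$, so Lemma \ref{lem:subsurface projection} gives $d_{\mathcal{C}(S_{\gamma^{*}})}(\alpha',\beta')\le 4$.

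I would then run the subsurface-projection estimate exactly as in the proofs of Facts \ref{fact3.2} and \ref{fact:3.4}. By Lemma \ref{lem:compressible hole} applied to the disk $D\subset V^{*}$ with $\partial D=\alpha'$, there is an essential disk $D_{1}\subset V^{*}$ with $\partial D_{1}\subset S_{\gamma^{*}}$ and $d_{\mathcal{C}(S_{\gamma^{*}})}(\pi_{S_{\gamma^{*}}}(\alpha'),\partial D_{1})\le 3$; symmetrically there is an essential disk $E_{1}\subset W^{*}$ with $\partial E_{1}\subset S_{\gamma^{*}}$ and $d_{\mathcal{C}(S_{\gamma^{*}})}(\pi_{S_{\gamma^{*}}}(\beta'),\partial E_{1})\le 3$. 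Here $\partial D_{1}$ is essential in $S_{\gamma^{*}}$: if it were isotopic to $\gamma^{*}$, then $\gamma^{*}$ would bound a disk in $V^{*}$, contradicting (i); and similarly for $\partial E_{1}$. The triangle inequality yields
\[
d_{\mathcal{C}(S_{\gamma^{*}})}(\partial D_{1},\partial E_{1})\le 3+4+3=10.
\]
But $\partial D_{1}$ and $\partial E_{1}$ are essential curves lying in $S_{\gamma^{*}}$, hence disjoint from $\gamma^{*}$, and they bound essential disks in $V^{*}$ and $W^{*}$ respectively; so the defining property of $M^{*}$ forces $d_{\mathcal{C}(S_{\gamma^{*}})}(\partial D_{1},\partial E_{1})\ge 11$, a contradiction. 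Therefore $\gamma'=\gamma^{*}$, and every geodesic realizing the Heegaard distance of $V^{*}\cup_{S^{*}}W^{*}$ has $\gamma^{*}$ as a vertex.

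The step I expect to require the most care is (i) --- pinning down that $\gamma^{*}$ bounds no essential disk on either side, so that ``being a vertex'' upgrades to ``being the middle vertex''. The reduction to a distance $\le 1$ contradiction with Fact \ref{fact:3.4} is the cleanest route, though one could instead argue directly from the amalgamation description $V^{*}=V_{1}\cup_{A_{1}^{1}}ST_{1}$, using that a core of $A_{1}^{2}$ meets the meridian of $ST_{1}$ more than once and hence is not a meridian. The rest is the same bookkeeping of the constants $3,4,3$ against the threshold $11$ already performed in Section \ref{sec3}.
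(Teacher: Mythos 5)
Your proposal is correct and follows essentially the same route as the paper's proof: assume the middle vertex $\gamma'$ is not $\gamma^{*}$, observe that $S_{\gamma^{*}}$ is a compressible hole for both $V^{*}$ and $W^{*}$, apply Lemma \ref{lem:compressible hole} to get disks with boundary in $S_{\gamma^{*}}$ within projection distance $3$ of $\alpha'$ and $\beta'$, bound the projection distance between $\alpha'$ and $\beta'$ by $4$ using Lemma \ref{lem:subsurface projection}, and contradict the threshold $11$ via $3+4+3=10$. Your added checks (that $\gamma^{*}$ bounds no disk on either side via Fact \ref{fact:3.4}, that $\gamma'$ cuts $S_{\gamma^{*}}$, and that $\partial D_{1},\partial E_{1}$ are not isotopic to $\gamma^{*}$) are points the paper leaves implicit, but they do not change the argument.
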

\begin{proof}
Suppose not. Then there is one geodesic
\[\mathcal {G}_{1}=\{\alpha_{1}, \gamma_{1}, \beta_{1}\}\]
so that

(1) it realizes the Heegaard distance;

(2) $\gamma_{1}$ is not isotopic to $\gamma^{*}$.

Let $S_{\gamma^{*}}$ be the closure of the complement of $\gamma^{*}$ in $S^{*}$.
Since $\gamma^{*}$ bounds essential disks in neither $V^{*}$ nor $W^{*}$ and is non separating,  $S_{\gamma^{*}}$ is a compressible hole for both of these two disk complexes of $V^{*}$ and $W^{*}$.
By Lemma \ref{lem:compressible hole}, for $\alpha_{1}$ (resp. $\beta_{1}$), there is an essential disk $D$ ($E$) so that

(1) $\partial D$ (resp. $\partial E$) is disjoint from $\gamma^{*}$;

(2) there is an essential disk $D_{1}$ (resp. $E_{1}$) is disjoint from $ D$ (resp. $E$);

(3) there is one component of $a$ of $\alpha_{1}\cap S_{\gamma^{*}}$ (resp. $b$ of $\beta_{1}\cap S_{\gamma^{*}}$) disjoint from
one component of $\partial D_{1}\cap S_{\gamma^{*}}$ (resp. $\partial E_{1}\cap S_{\gamma^{*}}$).

Then by Lemma \ref{lem:subsurface projection}, \[d_{\mathcal {C}(S_{\gamma^{*}})}(\pi_{S_{\gamma^{*}}}(a),\partial D)\leq 3 ~\rm{and}~
d_{\mathcal {C}(S_{\gamma^{*}})}(\pi_{S_{\gamma^{*}}}(b), \partial E)\leq 3.\]
Since each component of $\gamma_{1}\cap S_{\gamma^{*}}$ is disjoint from $a$ and $b$ and  not isotopic to $\gamma^{*}$, by Lemma \ref{lem:subsurface projection},
\[d_{\mathcal {C}(S_{\gamma^{*}})}(\pi_{S_{\gamma^{*}}}(a),\pi_{S_{\gamma^{*}}}(b))\leq 4.\]
Then by triangle inequality,
$d_{\mathcal {C}(S_{\gamma^{*}})}(\partial D, \partial E)\leq 10$. It contradicts the assumption of $\gamma^{*}$.
\end{proof}

 For the closed orientable irreducible 3-manifold $M^{*}$, Geometrization Conjecture says that there are  finitely many essential tori so that after cutting $M^{*}$ along these tori, each piece is either hyperbolic, Seifert or Solvable. Thus to understand the geometry of $M^{*}$, the first thing is to check the possible embedded essential tori in it. It is known that for any possible essential torus $T^{2}$ in $M^{*}$, by Schultens' Lemma \cite{schultens},  they can be isotoped to a general position that $T^{2}\cap S^{*}$ consists of essential simple closed curves in both $S^{*}$ and $T^{2}$. After pushing the possible boundary parallel annulus to the other side, we assume that each component of $T^{2}\cap V^{*}$ (resp. $T^{2}\cap W^{*}$) is an essential annulus in $V^{*}$ (resp. $W^{*}$). On one side,  a boundary compression on an essential annulus produces an essential disk. So for each component of $\gamma\subset T^{2}\cap S^{*}$, there is a geodesic containing it as its one vertex, which realizes Heegaard distance. On the other side, by Fact \ref{fact:4.1},  each geodesic realizing distance of Heegaard splitting $V^{*}\cup_{S^{*}}W^{*}$ shares the same vertex $\gamma^{*}$. Hence each component of $T^{2}\cap S^{*}$ is isotopic to $\gamma^{*}$.

 After doing a boundary compression on one annulus component of
 $T^{2}\cap V^{*}$, there is an essential separating disk $D$ in $V^{*}$ so that

 (1) $D$ cuts out a solid torus $ST$ in $V^{*}$;

 (2) each component of $T^{2}\cap V^{*}$ lies in $ST$.

  The reason for  case (2) happening is that we  choose a boundary compression disk for $T^{2}\cap V^{*}$ so that its interior intersects them empty. Then after doing boundary compression along this disk,  the resulted disk $D$ is disjoint from all components of $T^{2}\cap V^{*}$. Since these two boundaries of this annulus are isotopic,  $D$ cuts out a solid torus from $V^{*}$. Then all components of $T^{*}\cap V^{*}$ are contained in this solid torus after isotopy.

  As all components of $T^{2}\cap V^{*}$  are pairwise disjoint,  all these components of $T^{2}\cap V^{*}$ are parallel, i.e., any two components of $T^{2}\cap V^{*}$ cuts out an I-bundle of annulus. So are $T^{2}\cap W^{*}$. Since the union of all these annuli is $T^{2}$,

 \begin{fact}
 $T^{2}$ intersects $V^{*}$ in only one essential annulus.
 \label{fact:4.2}
\end{fact}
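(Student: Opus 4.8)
The plan is to argue by contradiction, having first isotoped $T^{2}$ (using Schultens' Lemma) into a position that minimises the number of components of $T^{2}\cap S^{*}$ among all tori isotopic to $T^{2}$ in general position, and then supposing that $T^{2}\cap V^{*}$ has at least two components. I will use the structure already extracted above: every component of $T^{2}\cap S^{*}$ is isotopic in $S^{*}$ to $\gamma^{*}$; the components of $T^{2}\cap V^{*}$ are essential annuli which are pairwise parallel in $V^{*}$ and all lie in the solid torus $ST$ cut off by the boundary-compression disk $D$; and, symmetrically, the components of $T^{2}\cap W^{*}$ are essential annuli, pairwise parallel in $W^{*}$, all contained in a solid torus $ST^{*}\subset W^{*}$.

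First I would record the combinatorial picture. Since $T^{2}$ is a torus, the components of $T^{2}\cap S^{*}$ are $2m$ disjoint circles $c_{1},\dots,c_{2m}$, parallel on $T^{2}$, cutting it into $2m$ annuli that alternately lie in $V^{*}$ and in $W^{*}$ (consecutive annuli of the cut lie on opposite sides of $S^{*}$), so $T^{2}\cap V^{*}$ has exactly $m$ components and so does $T^{2}\cap W^{*}$. On $S^{*}$ the $c_{i}$ are parallel copies of the non-separating curve $\gamma^{*}$, hence carry a natural linear order, and any two of them bound a unique annulus in $S^{*}$ (the other side has positive genus, since the Heegaard genus is at least $2$). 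The local fact I would then establish is: if two components $A,A'$ of $T^{2}\cap V^{*}$ cobound an $I$-bundle $A\times I$ in $V^{*}$, then the pair $\partial A$ and the pair $\partial A'$ cannot be unlinked in this linear order; for otherwise the two vertical annuli of the $I$-bundle lie on $S^{*}$, each is the unique $S^{*}$-annulus between one circle of $\partial A$ and one circle of $\partial A'$, and a short index count shows that every possible choice of these two vertical annuli forces them to overlap, contradicting that they are disjoint. Applying this to all pairs of components of $T^{2}\cap V^{*}$, and symmetrically to those of $T^{2}\cap W^{*}$, severely restricts how the $c_{i}$ can be matched up by the $V^{*}$- and the $W^{*}$-annuli.

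Next, I would take two components $A_{1},A_{2}$ of $T^{2}\cap V^{*}$ that are consecutive in the nesting inside $ST$, form the product region $P\cong A\times I$ they cobound, with interior disjoint from $T^{2}$ and with vertical boundary two annuli $G_{1},G_{2}\subset S^{*}$. Combining the matching restrictions above with the parallelism of the $W^{*}$-annuli and with the facts that $T^{2}$ is connected and embedded, one identifies the two components of $T^{2}\cap W^{*}$ abutting $P$ along $G_{1}$ and along $G_{2}$, and sees that they cobound a parallel product region $P'\subset W^{*}$ whose vertical boundary is again $G_{1}\cup G_{2}$. Then $P\cup P'$ is a region bounded by $T^{2}$ on one side and, on the other, by a torus parallel to $T^{2}$ that meets $S^{*}$ in exactly two circles; pushing $T^{2}$ across $P\cup P'$ yields an isotopic torus meeting $S^{*}$ in $2m-2$ circles, contradicting minimality. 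Hence $T^{2}$ meets $V^{*}$ (and $W^{*}$) in a single annulus. An equivalent way to package the endgame: from $T^{2}\subset ST\cup ST^{*}$ and irreducibility of $M^{*}$, the union $ST\cup ST^{*}$ cannot be a solid torus (otherwise $T^{2}$ would compress); the resulting small Seifert-fibred piece $ST\cup ST^{*}$ then forces $T^{2}$ to be parallel to its boundary torus, and one checks directly that this boundary torus can be isotoped to meet $S^{*}$ in two circles, again giving $m=1$.

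The step I expect to be the main obstacle is identifying $P'$ and checking that its vertical boundary is exactly $G_{1}\cup G_{2}$: this is where the more complicated matchings of the $c_{i}$, which the purely combinatorial constraints of the second paragraph do not by themselves exclude, must be ruled out, and it is the reason one needs the full strength of the hypotheses that the annuli on each side are all mutually parallel (not merely that their boundary pairs are pairwise non-crossing) and that they are confined to the solid tori $ST$ and $ST^{*}$. Once $P'$ is in hand, the isotopy across $P\cup P'$ and the counting of intersection circles are routine.
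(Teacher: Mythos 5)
Your proposal takes a genuinely different route from the paper, and it has a real gap at exactly the step you flag. The paper's proof is driven by the distance-$2$ hypothesis: the boundary-compressing disks $D\subset V^{*}$ and $E\subset W^{*}$ that cut off the solid tori containing $T^{2}\cap V^{*}$ and $T^{2}\cap W^{*}$ must satisfy $\partial D\cap\partial E\neq\emptyset$ (strong irreducibility), and this forces the outermost $V^{*}$-annulus and the outermost $W^{*}$-annulus to share both boundary circles; their union is then a closed surface contained in $T^{2}$, hence is $T^{2}$ itself (it cannot be a Klein bottle inside a torus), so $T^{2}\cap S^{*}$ has only two components, contradicting the assumption of at least two annuli per side. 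Your argument never uses strong irreducibility at all; it tries to conclude from minimality of $|T^{2}\cap S^{*}|$ plus the combinatorics of parallelism, and that is precisely where it breaks.

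Concretely, the step you call ``the main obstacle'' is not just unproved, it fails as stated. If $m\geq 3$, a region $P'\subset W^{*}$ whose vertical boundary is exactly $G_{1}\cup G_{2}$ cannot exist: its horizontal boundary together with $A_{1}\cup A_{2}$ would be a closed surface properly contained in the connected torus $T^{2}$, which is impossible; so your reduction move is structurally unavailable except when $m=2$. And when $m=2$ the move can still fail: with circles $c_{1}<c_{2}<c_{3}<c_{4}$, the pairing $\{c_{1},c_{3}\},\{c_{2},c_{4}\}$ for the $V^{*}$-annuli and $\{c_{1},c_{4}\},\{c_{2},c_{3}\}$ for the $W^{*}$-annuli satisfies every constraint you list (no unlinked pair on either side, single cycle, all annuli mutually parallel and confined to the solid tori), yet then $\partial(P\cup P')=T^{2}$ and $P\cup P'$ is a twisted $I$-bundle over a Klein bottle, so there is no isotopy ``pushing $T^{2}$ across $P\cup P'$'' that reduces the intersection number. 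This linked configuration is exactly what must be excluded, and your combinatorial constraints do not exclude it; the paper excludes it by the $\partial D\cap\partial E\neq\emptyset$ argument. Your fallback endgame is also unjustified: $ST\cup ST^{*}$ is glued along $F_{V}\cap F_{W}$, a complicated subsurface of $S^{*}$ rather than an annulus or torus, so there is no reason it is a small Seifert piece forcing $T^{2}$ to be boundary-parallel. To repair the proof you should reintroduce the distance-$2$ hypothesis, e.g.\ via the disks $D$ and $E$ as the paper does.
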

\begin{proof}
Suppose not. Then there are at least two essential annulus in $V^{*}$. And there is an essential disk $D\subset V^{*}$ such that
$D$ cuts out a solid torus containing $T^{2}\cap V^{*}$. For $T^{2}\cap W^{*}$, there is also an essential disk $E\subset W^{*}$ such that $E$ cuts out a solid torus containing $T^{2}\cap W^{*}$, see Figure \ref{fig:nested annuli}.

\begin{figure}[!htbp]
\begin{center}
\includegraphics[scale=1.5]{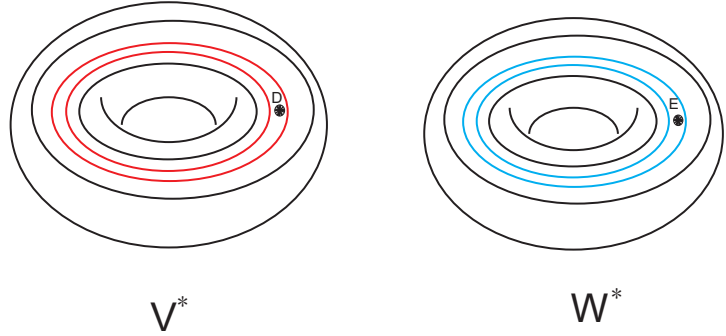}
\caption{Parallel Annuli}
\label{fig:nested annuli}
\end{center}
\end{figure}



Since the distance of Heegaard splitting $V^{*}\cup_{S^{*}}W^{*}$ is 2, $\partial D\cap \partial E\neq \emptyset$.
It means that the red circles coincide with the blue circles in Figure \ref{fig:nested annuli}. Then the essential
annulus bounded by the red circles in $V^{*}$ and the essential annulus bounded by the blue circles in $W^{*}$
are patched together in $T^{2}$. And the resulted manifold is a torus or a kleinian bottle. But $T^{2}$ contains no Kleinian bottle as its subset.
So  the resulted manifold is a torus which is the $T^{2}$, where it intersects $S^{*}$ in only
two simple closed curves. A contradiction.

\end{proof}

Moreover, the proof of Fact \ref{fact:4.2} indicates that

\begin{fact}
  if $M^{*}$ is toroidal, there is only one essential separating torus in $M^{*}$ up to isotopy.
  \label{fact:4.10}
 \end{fact}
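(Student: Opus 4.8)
The plan is to show that any two essential separating tori $T^2_1$ and $T^2_2$ in $M^*$ are isotopic. First I would isotope each $T^2_i$ into standard position with respect to the Heegaard surface $S^*$ by Schultens' Lemma, exactly as in the discussion preceding Fact \ref{fact:4.2}: after discarding boundary-parallel annuli, every component of $T^2_i\cap V^*$ and of $T^2_i\cap W^*$ is an essential annulus, and every component of $T^2_i\cap S^*$ lies on some geodesic of $\mathcal{C}(S^*)$ realizing the Heegaard distance, hence is isotopic to $\gamma^*$ by Fact \ref{fact:4.1}. By Fact \ref{fact:4.2}, $T^2_i\cap V^*$ is a single essential annulus $A^V_i$ and $T^2_i\cap W^*$ is a single essential annulus $A^W_i$; consequently $T^2_i\cap S^*$ consists of exactly two simple closed curves, each parallel to $\gamma^*$, and $T^2_i=A^V_i\cup A^W_i$.

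Next I would bring the two tori into a common position near $S^*$. Since all the curves of $(T^2_1\cup T^2_2)\cap S^*$ are isotopic to the single curve $\gamma^*$, a small isotopy moves them into a fixed annular neighborhood $N(\gamma^*)\subset S^*$ and makes them pairwise disjoint; using that $M^*$ is irreducible and that the $A^V_i$ (resp. $A^W_i$) are incompressible, one further arranges $T^2_1\cap T^2_2=\emptyset$, since any circle of intersection would be a core of both annuli on one side and could be removed. Now $A^V_1$ and $A^V_2$ are disjoint essential annuli in the handlebody $V^*$ whose four boundary curves are copies of $\gamma^*$; applying the construction in the proof of Fact \ref{fact:4.2} to these annuli produces an essential separating disk $D\subset V^*$ cutting off a solid torus $ST_V\subset V^*$ with $A^V_1\cup A^V_2\subset ST_V$, and likewise a disk $E\subset W^*$ cutting off a solid torus $ST_W$ with $A^W_1\cup A^W_2\subset ST_W$.

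Two disjoint incompressible annuli in a solid torus whose boundary curves all have the same non-meridional slope are parallel, so they cobound a product region $A\times I$ with $A$ an annulus. Hence $A^V_1$ and $A^V_2$ cobound such a region $P_V$ inside $ST_V\subset V^*$, and $A^W_1$, $A^W_2$ cobound a region $P_W$ inside $ST_W\subset W^*$. Gluing $P_V$ to $P_W$ along the sub-annuli of $S^*$ lying between consecutive copies of $\gamma^*$ recovers the component $P$ of $M^*\setminus(T^2_1\cup T^2_2)$ bounded by $T^2_1$ and $T^2_2$. Then $P$ is an $I$-bundle over a closed surface with $\chi=0$; since $\partial P=T^2_1\sqcup T^2_2$ has two torus components, $P$ cannot be the twisted $I$-bundle over the Klein bottle, so $P\cong T^2\times I$ and $T^2_1$ is isotopic to $T^2_2$. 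This gives the Fact.

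I expect the main obstacle to be the normalization step in the second paragraph, namely making $T^2_1$ and $T^2_2$ genuinely disjoint after the standard-position isotopy, i.e. ruling out unremovable intersections of $A^V_1$ with $A^V_2$ (or of $A^W_1$ with $A^W_2$) inside the handlebody, together with checking that the complementary region $P_V\cup P_W$ closes up to a product rather than a twisted $I$-bundle. The remaining ingredients --- Schultens' Lemma, Fact \ref{fact:4.1}, Fact \ref{fact:4.2}, and the behaviour of parallel incompressible annuli in a solid torus --- are routine given what has already been proved.
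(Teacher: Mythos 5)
Your overall strategy --- put both tori into the standard position given by Facts \ref{fact:4.1} and \ref{fact:4.2}, so that each torus is one essential annulus in $V^{*}$ plus one in $W^{*}$ with boundary two copies of $\gamma^{*}$, and then exhibit a product region $T^{2}\times I$ between them --- is exactly the circle of ideas the paper points to when it derives Fact \ref{fact:4.10} from the proof of Fact \ref{fact:4.2}, and filling it in is worthwhile since the paper gives no details. But two of your steps are genuine gaps. First, the disjointness of $T^{2}_{1}$ and $T^{2}_{2}$: after the boundary curves are made disjoint in $S^{*}$, the remaining circles of $A^{V}_{1}\cap A^{V}_{2}$ are cores of both annuli, and such essential--essential intersection circles cannot be removed by innermost-disk/irreducibility moves; in general two incompressible annuli (or tori) in an irreducible manifold cannot be isotoped off each other, so this step needs an actual argument exploiting the structure at hand (e.g.\ choosing the boundary-compression disk for $A^{V}_{1}$ with interior disjoint from $A^{V}_{2}$, which itself requires an arc/circle exchange argument you have not supplied). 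You name this as the main obstacle but do not resolve it. Relatedly, the disk $D$ in the proof of Fact \ref{fact:4.2} is produced from a boundary compression chosen disjoint only from the \emph{same} torus's annuli, so the claim that one disk cuts off a solid torus containing both $A^{V}_{1}$ and $A^{V}_{2}$ also needs justification.

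Second, the asserted lemma ``two disjoint incompressible annuli in a solid torus whose boundary curves all have the same non-meridional slope are parallel'' is false unless the slope is longitudinal: if the slope winds $k\geq 2$ times around the core, the push-ins of two disjoint boundary annuli of that slope are disjoint, incompressible and of the same slope, yet the only complementary region adjacent to both is a solid torus in which each annulus winds $k$ times, hence is not a parallelism region. This is precisely the relevant case, because in the toroidal situation $\gamma^{*}$ is not a co-core, so the copies of $\gamma^{*}$ are not longitudes of $ST_{V}$. The correct way through is to use that neither $A^{V}_{i}$ is boundary-parallel in $V^{*}$: this forces each $A^{V}_{i}$ to be parallel inside $ST_{V}$ to the boundary annulus containing the compressing disk, so the two parallelism regions are nested and a genuine product region between $A^{V}_{1}$ and $A^{V}_{2}$ with vertical boundary in $S^{*}$ can be extracted; the same point must be made in $W^{*}$ before gluing $P_{V}$ to $P_{W}$ and concluding $P\cong T^{2}\times I$. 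As written, the parallelism step (and hence the final identification of $P$) does not follow from what you cite.
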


We begin to prove Theorem \ref{thm1}, which is rewritten as follows.
\begin{thm}
For a manifold $M$ admitting a distance 2, genus at least 2 Heegaard splitting $V\cup_{S}W$, if there is an essential non-separating simple closed curve $\gamma$ in $S$ so that

(1) $\gamma$ bounds no essential disk in $V$ or $W$;

(2) there is a geodesic realizing Heegaard distance of $V\cup_{S}W$ with $\gamma$ as one of its vertices;

(3) for any pair of essential simple closed curves $\alpha$ and $\beta$ bounding disks in $V$ and $W$ respectively, if they are disjoint from
$\gamma$, then $d_{\mathcal {C}(S_{\gamma})}(\alpha, \beta)\geq 11$,

then $M$ is either a hyperbolic 3-manifold or an amalgamation of a hyperbolic 3-manifold and a small Seifert 3-manifold along an incompressible torus.
\label{thm:4.1}
\end{thm}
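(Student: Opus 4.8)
\emph{Plan of proof.} I would run the argument through Thurston's Geometrization after establishing one structural observation. Since the splitting has distance $2>1$ it is strongly irreducible, so $M$ is irreducible, and by Geometrization $M$ is hyperbolic, Seifert, a solvmanifold, or toroidal; the plan is to exclude the two middle possibilities and to analyse the toroidal one. The structural observation, which generalizes Fact \ref{fact:4.1} and which I would prove exactly as there, is that \emph{every geodesic of $\mathcal{C}(S)$ realizing the Heegaard distance has $\gamma$ as its middle vertex}: by (1), $\gamma$ is not an endpoint of such a geodesic; by (1) and (2), $S_{\gamma}$ is a compressible hole for the disk complexes of both $V$ and $W$, the curves $\alpha,\beta$ of a realizing geodesic through $\gamma$ being disk boundaries lying in $S_{\gamma}$; and if some realizing geodesic $\{\alpha_{1},\gamma_{1},\beta_{1}\}$ had $\gamma_{1}$ not isotopic to $\gamma$, then applying Lemma \ref{lem:compressible hole} to $\alpha_{1}$ and to $\beta_{1}$, together with Lemma \ref{lem:subsurface projection} and the triangle inequality, would produce essential disks of $V$ and $W$ with boundaries disjoint from $\gamma$ and at distance $\le 3+4+3=10$ in $\mathcal{C}(S_{\gamma})$, contradicting (3).

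\emph{The Seifert and solvmanifold cases.} A large Seifert manifold and a solvmanifold are toroidal, so here I may assume $M$ is Seifert with base $S^{2}$ and at most three exceptional fibers, with the splitting of genus two (a genus $\ge 3$ vertical splitting has distance $\le 1$ by \cite{h01}), hence vertical or horizontal by \cite{MS}. In the vertical case I would invoke Corollary \ref{cor:1} to get essential disks $D_{1}\subset V$, $D_{2}\subset W$ and two non-isotopic essential curves $C_{1},C_{2}$ disjoint from $\partial D_{1}\cup\partial D_{2}$; then each $\{\partial D_{1},C_{i},\partial D_{2}\}$ has length $\le 2$, hence exactly $2$ since the splitting has distance $2$, giving two distance-realizing geodesics with distinct middle vertices and contradicting the structural observation. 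The horizontal case is identical, using the two curves $C\times\{0\}$ and $C\times\{\frac{1}{2}\}$ furnished by Lemma \ref{lem:Seifert1}(2). Hence $M$ is neither Seifert nor solvable, and if $M$ is not toroidal then $M$ is hyperbolic.

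\emph{The toroidal case.} Let $T^{2}$ be an essential torus. By Schultens' Lemma \cite{schultens} and the proofs of Facts \ref{fact:4.1} and \ref{fact:4.2} (which use only (1)--(3)), $T^{2}$ is unique up to isotopy and meets each of $V,W$ in a single essential annulus whose two boundary curves are parallel copies of $\gamma$. Boundary-compressing these annuli gives essential disks $D_{V}\subset V$, $D_{W}\subset W$, each cutting off a solid torus ($ST_{V}$, $ST_{W}$) that contains the corresponding annulus, so $T^{2}$ separates $M$ into a piece $M_{1}\subset ST_{V}\cup ST_{W}$, which is a union of two solid tori glued along an incompressible annulus and hence a small Seifert manifold with torus boundary (not a solid torus or $T^{2}\times I$, since $T^{2}$ is essential), and a complementary piece $M_{2}$ with $\partial M_{2}=T^{2}$. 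Capping the two boundary circles of $S_{\gamma}$ with an annulus in $T^{2}$ produces a closed genus-$g$ surface $\widehat S$ that I would take as a Heegaard surface of $M_{2}$, splitting it into a handlebody and a compression body with negative boundary $T^{2}$ and having $S_{\gamma}$ as a compressible hole on both sides. I would then show this splitting has distance at least $3$: were it $\le 2$, the same computation as above would yield essential disks of $V$ and of $W$ with boundary in $S_{\gamma}$, hence disjoint from $\gamma$, at distance $\le 3+4+3=10$ in $\mathcal{C}(S_{\gamma})$, contradicting (3). By Hartshorn's theorem a distance $\ge 3$ splitting forces $M_{2}$ to be irreducible, boundary-irreducible, atoroidal and anannular, so $M_{2}$ is hyperbolic by Thurston's hyperbolization of Haken manifolds; therefore $M=M_{2}\cup_{T^{2}}M_{1}$ is the amalgamation of a hyperbolic manifold and a small Seifert manifold along the incompressible torus $T^{2}$, which is the unique essential torus up to isotopy.

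\emph{The main obstacle.} I expect the toroidal case to be the hard part --- specifically, realizing $\widehat S$ as a Heegaard surface of $M_{2}$ with $S_{\gamma}$ a compressible hole on each side, and converting hypothesis (3) into the bound $d(\widehat S)\ge 3$. This requires checking that the disks produced by Lemma \ref{lem:compressible hole} inside the two sides of $M_{2}$ are genuine essential disks of $V$ and $W$ with boundary missing $\gamma$, and that the slack in the constant $11$ of (3) absorbs the two $+3$'s from Lemma \ref{lem:compressible hole} together with the factor-$2$ loss per step in the subsurface projection of Lemma \ref{lem:subsurface projection}.
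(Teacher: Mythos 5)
Your structural observation, your exclusion of the small Seifert case, and your decomposition of the toroidal case along the unique torus $T^{2}$ into $M_{1}$ (two solid tori glued along an annulus) and $M_{2}$ with Heegaard surface $\widehat S=S_{\gamma}\cup A^{*}$ all match the paper. The genuine gap is your claim that the induced splitting of $M_{2}$ has distance at least $3$. It does not: it has distance exactly $2$. Since $\gamma$ is the middle vertex of a geodesic realizing the distance of $V\cup_{S}W$, the subsurface $S_{\gamma}$ is a compressible hole for \emph{both} sides of $\widehat S$ (as the paper notes in its Claim 4.2), so there are essential disks of $V_{2}$ and of $W_{2}$ with boundary contained in $S_{\gamma}$; both of these boundaries are disjoint from the core $c^{*}$ of the capping annulus $A^{*}$, and $c^{*}$ is an essential curve of $\widehat S$ not isotopic to either boundary (otherwise $\gamma$ would bound a disk, contradicting (1)). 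Hence $d(\widehat S)\le 2$. Your proposed derivation of ``distance $\ge 3$'' fails precisely in this configuration: hypothesis (3) only controls distances \emph{after projecting to} $S_{\gamma}$, and a middle vertex isotopic to $c^{*}$ (i.e.\ to $\gamma$) has empty projection, so the $3+4+3$ computation of Fact \ref{fact:4.1} simply does not apply to it. Consequently you cannot reach atoroidality and anannularity of $M_{2}$ through a Hartshorn-type distance bound. (A secondary point: even granting high distance, Hartshorn's theorem concerns closed incompressible surfaces; ruling out essential \emph{annuli} would require Scharlemann's extension $d\le 2-\chi(Q)$ to bounded essential surfaces, not the cited theorem.)

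The paper closes exactly this hole by different means, and you would need to do the same: atoroidality of $M_{2}$ comes not from its Heegaard distance but from the uniqueness statement in $M$ (Facts \ref{fact:4.2} and \ref{fact:4.10}: any essential torus of $M_{2}$ would be essential in $M$, hence isotopic to $T^{2}$, hence boundary-parallel in $M_{2}$); and anannularity is proved directly: isotope an essential annulus $A_{1}\subset M_{2}$ so that $A_{1}\cap\widehat S$ consists of curves essential in both (Schultens), observe that at least one such curve $\gamma_{1}$ is \emph{not} isotopic to $\gamma$ --- otherwise $A_{1}$ lies in a collar of $T^{2}$ and is inessential --- boundary-compress the annulus pieces to obtain essential disks of $V_{2}$ and $W_{2}$ disjoint from $\gamma_{1}$, and only then run the compressible-hole/projection argument (Lemmas \ref{lem:compressible hole} and \ref{lem:subsurface projection}) to get two disks of $V$ and $W$ with boundaries in $S_{\gamma}$ at distance at most $10$ in $\mathcal{C}(S_{\gamma})$, contradicting (3). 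Irreducibility and boundary-irreducibility of $M_{2}$, which you also tried to extract from the false distance bound, follow instead directly from irreducibility of $M$ and incompressibility of $T^{2}$. With these replacements your outline coincides with the paper's proof; the Seifert-case variant you give (two non-isotopic middle vertices in the horizontal case) is fine, modulo checking that the curve $C$ is non-peripheral so that $C\times\{0\}$ and $C\times\{\tfrac12\}$ are non-isotopic in $S$.
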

\begin{proof}

Since $M$ admits a distance 2 Heegaard splitting, by Haken's Lemma, $M$ is irreducible. It is known that every irreducible closed orientable 3-manifold $M$ either contains an essential torus or not. In the later case, by Geometrization conjecture,  $M$ is either a
small Seifert 3-manifold or a hyperbolic 3-manifold.

\begin{clm}
$M$ is not a small Seifert 3-manifold.
\label{clm:1}
\end{clm}
\begin{proof}
Suppose not. Then $M$ is a small Seifert 3-manifold. Hence it has $S^{2}$ as its base surface with at most three exceptional fibers.
If $M$ has only one or two exceptional fibers, then $M$ is a Lens space. But  all genus at least 2 Heegaard splitting of a Lens
space is stabilized, reducible, i.e., they all have distance 0. So $M$ contains three exceptional fibers.

Moriah and Schultens \cite{MS} proved that each irreducible Heegaard splitting of $M$ is either vertical or horizontal.  For the Heegaard splitting $V\cup_{S}W$, if it is  vertical,
then it has genus 2. By Corollary \ref{cor:1}, there are  two essential disks $D_{1}$ and $D_{2}$  from two sides of $S$ and two non isotopy  disjoint essential simple closed curves $C_{1}$ and $C_{2}$ so that both $C_{1}$ and $C_{2}$ are disjoint from $\partial D_{1}$ and $\partial D_{2}$. But under the condition that $d_{\mathcal {C}(S_{\gamma})}(\alpha, \beta)\geq 11$,  by the proof of Fact \ref{fact:4.1}, $C_{1}$ is isotopic to $C_{2}$. so it is impossible. Hence it is a horizontal Heegaard splitting.

Recall that for a horizontal Heegaard splitting, $M_{1}=F\times I/(x,0)\sim(\psi(x), 1)$, where $\partial F$ is connected and $\psi\mid \partial F\times I =Id$,  and $M=M_{1}\cup B^{2}\times S^{1}$. And $V=F\times [0, \frac{1}{2}]$ (resp. $W$ is homeomorphic to $F\times [\frac{1}{2}, 1]$).
By Lemma \ref{lem:Seifert1}, there is an essential simple closed curve $C\in F$ so that  $\mathcal {A}_{1}=C\times [0, \frac{1}{2}]$ and $\mathcal {A}_{2}=C\times [\frac{1}{2}, 1]$ so that $\partial \mathcal {A}_{1}$ intersects $\partial \mathcal {A}_{2}$ in at most one point.

It is not hard to see that there are a pair of essential disks of two sides of $S$ so that their boundary disjoint from $C\times\{\frac{1}{2}\}$.
By the proof of Fact \ref{fact:4.1}, $C\times \{\frac{1}{2}\}$ is isotopic to $\gamma$. Let $a$ be an arc in $F$ disjoint from $C$. Then
 there is an essential disk $D_{1}=a\times [0, \frac{1}{2}]$ (resp. $D_{2}=a \times [\frac{1}{2}, 1]$) disjoint from $C\times\{\frac{1}{2}\}$.  Thus $D_{1}\cap \mathcal {A}_{1}=\emptyset$ ( resp. $D_{2}\cap \mathcal {A}_{2}=\emptyset$ ). Hence
\begin{eqnarray*}
d_{\mathcal {C}(S_{\gamma})}(\partial D_{1}, \partial D_{2})&\leq& diam_{\mathcal {C}(S_{\gamma})}(\partial D_{1}, \partial \mathcal {A}_{1})+\\
                                                             &+&   diam_{\mathcal {C}(S_{\gamma})}(\partial \mathcal{A}_{1}, \partial \mathcal{A}_{2})+\\
                                                             &+&   diam_{\mathcal {C}(S_{\gamma})}(\partial D_{2}, \partial \mathcal {A}_{2})\\
                                                             &\leq &  1+2+1\\
                                                             &=& 4.
\end{eqnarray*}
It contradict the choice of $\gamma$.

\end{proof}

So $M$ is  hyperbolic or toroidal.

If $M$ is a hyperbolic manifold, then the proof ends. So we assume that $M$ contains an essential torus $T^{2}$.
By Fact \ref{fact:4.1}, \ref{fact:4.2} and \ref{fact:4.10},

(1) it contains only one essential torus $T^{2}$ up to isotopy, where
it is separating;

(2) each component of $T^{2}\cap S$ is isotopic to $\gamma$;

(3) $T^{2}\cap V$ (resp. $T^{2}\cap W$) splits $V$ (resp.$W$) into a solid torus and a handlebody.

Let $A$ be an annulus bounds by $T^{2}\cap S$ in $S$ and $S_{A}=\overline{S-A}=S_{\gamma}$.
Let $M_{1}$ be the amalgamation of these two solid tori along $A$.
It is not hard to see that $M_{1}$ is a small Seifert manifold with  a disk as its base surface.

Let $M_{2}=\overline{M-M_{1}}$. In the manifold $M_{2}$,  $\partial S_{A}$ consists of two isotopic essential simple closed curves in $\partial M_{2}=T^{2}$.
And $S_{A}$ cuts $M_{2}$ into two handlebodies.
Let $S_{2}$ be the union of $S_{A}$ and an annulus $A^{*}$ bounded by $\partial S_{A}$ in $\partial M_{2}$, see Figure \ref{fig:Heegaard surface 3}.
\begin{figure}[!htbp]
\begin{center}
\includegraphics[scale=1.5]{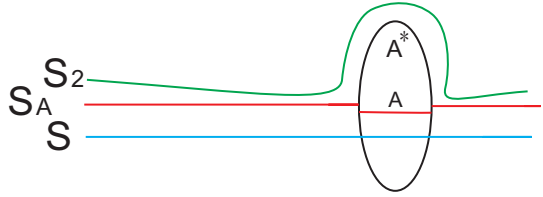}
\caption{Heeegaard surface $S_{2}$}
\label{fig:Heegaard surface 3}
\end{center}
\end{figure}

After pushing $S_{2}$ a little into the interior of ${M_{2}}$,
$S_{2}$ cuts $M_{2}$ into a handlebody and a compression body.
Then there is a Heegaard splitting $V_{2}\cup_{S_{2}}W_{2}$ for $M_{2}$. Similar to the proof of Fact \ref{fact3.2}, $S_{2}$ is a strongly irreducible Heegaard surface.

Remember that $S_{2}$ is also contained in $M$. So
\begin{fact}
$S_{2}$ share the essential subsurface $S_{A}$ with $S$ in common.
\label{fact:4.3}
\end{fact}
\begin{proof}
See Figure \ref{fig:Heegaard surface 3}.
\end{proof}
From Figure \ref{fig:Heegaard surface 3}, every essential disk in $V_{2}$ or $W_{2}$ with its boundary disjoint from $\partial S_{A}$ is a compression disks of $S_{A}$ in $V$ or $W$ respectively.

\begin{clm}
$M_{2}$ is irreducible, boundary irreducible, atoroidal and anannular.
\end{clm}
\begin{proof}
Since $M$ is irreducible and $T^{2}$ is incompressible, $M_{2}$ is irreducible and boundary irreducible.
By Fact \ref{fact:4.10} ,$M$ contains only one essential torus $T^{2}$ up to isotopy. Then $M_{2}$ is atoroidal.
Now suppose $M_{2}$ contains an essential annulus $A_{1}$. By Schultens' Lemma \cite{schultens},
$A_{1}\cap S_{2}$ are all essential simple closed curves in both $A_{1}$ and $S_{2}$.
After pushing all the boundary parallel annuli to the different side of $S_{2}$,
$A_{1}\cap V_{2}$ (resp. $A_{1}\cap W_{2}$) are essential annuli. We say at least one component $\gamma_{1}\subset A_{1}\cap S_{2}$ is not isotopic to $\gamma$.  For if not,  then there is an I-bundle of $\partial M_{2}=T^{2}$ containing $A_{1}$ after some isotopy,  which means that $A_{1}$ is inessential. Then there is an essential disk $D_{1}\subset V_{2}$ (resp. $E_{1}\subset W_{2})$) disjoint from $\gamma_{1}$.

Since $S_{2}$ cuts $M_{2}$ into a handlebody and a compression body, let $V_{2}$ be the handlebody.
From Figure \ref{fig:Heegaard surface 3}, $S_{2}$ is the union of $S_{A}$ and annulus $A^{*}$, where $V_{2}$ is a disk sum of a handlebody and I-bundle of the annulus $A^{*}$. Then the boundary of each essential disk in $V_{2}$ intersects $S_{A}$ nonempty.
So $S_{A}$ is a compressible hole. By a similar argument, $S_{A}$ is also a compressible hole for $W_{2}$.
Then by the proof of Fact \ref{fact:4.1}, there is a pair of essential disks $D\subset V_{2}$ for $D_{1}$ and $E\subset W_{2}$ for $E_{1}$ so that
$\partial D$ and $\partial E$ are both disjoint from $\partial S_{A}$ and
\[d_{\mathcal {C}(S_{A})}(\partial D, \partial E)\leq 10.\]
Remember that each essential disk in $V_{2}$ or $W_{2}$ disjoint from $\partial S_{A}$ is still an essential disk in $V$ or $W$ respectively  and
$S_{A}=S_{\gamma}$. Then it contradicts the choice of Heegaard splitting $V\cup_{S}W$.

\end{proof}

By Thurston's hyperbolic theorem of Haken manifolds, $M_{2}$ is hyperbolic.
\end{proof}
\begin{rmk}
The main result (Theorem 1.1) of Johnson, Minsky and Moriah's paper \cite{JMM} says that for a Heegaard splitting $V\cup_{S} W$, if there is an essential subsurface $F\subset S$ such that the distance of these two projections of  disk complexes $\mathcal {D}(V)$ and $\mathcal {D}(W)$ into $F$, denoted by $d_{F}(S)$, satisfies that $d_{F}(S)> 2g(S)+2$,  then up to an ambient isotopy, any Heegaard splitting of $M$ with genus less than or equal to $g(S)$ has the subsurface $F$ in common. For the Heegaard splitting in Theorem \ref{thm:4.1}, if  condition (3) is updated into $d_{\mathcal {S}_{1}}(\alpha, \beta)\geq \max\{2g(S)+3, 11\}$, then any Heegaard splitting $S^{'}$ of it with genus less than or equal to $ g(S)$ has $S_{1}$ in common up to an ambient isotopy. Since $\partial S_{1}$ bounds no disk in $M$, $S_{1}\subset S^{'}$ is essential.
By the calculation of the euler characteristic number,  $\partial S_{1}$ bounds an annulus $A$ in $S^{'}$.
The proof of Theorem \ref{thm1} implies that $A$ is parallel to an annulus in $S$. So the Heegaard $S$ is the unique minimal Heegaard surface up to isotopy.

\end{rmk}
\section{Proof of Theorem \ref{thm2}}
\label{sec5}

Let $M$, $V\cup_{S} W$ and $\gamma$ be the same as in Theorem \ref{thm:4.1}. On one side, since $\gamma$ is the middle vertex of a geodesic realizing Heegaard distance, there are two essential compression disks for $S$ disjoint from $\gamma$ from two sides of $S$. On the other side, as $\gamma$ is incompressible on both of these two sides of $S$, there is an essential subsurface $F\subset S$ containing $\gamma$ so that each essential simple closed curve in $F$ bounds no essential disk on either side of $S$. Then there is a maximal (defined later)  essential surface $F$ containing $\gamma$ so that there is no essential, i.e., incompressible and non peripheral, simple closed curve in $F$ so that it bounds an essential disk on either side of $S$.

It is possible that there are many  essential surfaces satisfying the property above. Thus we shall introduce some definitions for distinguishing all those surfaces. We call two subsurface $F_{1}$ and $F_{2}$ are same if $F_{1}$ is isotopic to $F_{2}$ in $S$. For a collection of different subsurfaces, we define a partial order as follows. For any two essential subsurface $F_{1}$ and $F_{2}$ of $S$, $F_{1}<F_{2}$ if $F_{1}$ can be isotopied into $F_{2}$ and $-\chi(F_{1})<-\chi(F_{2})$. Since there is a lowest bound for  all Euler characteristic numbers of those subsurface,  there is a maximal essential subsurface for any sequence of subsurfaces in order. For convenience,  for each one of these maximal essential subsurfaces,  we call it a domain of  $\gamma$.

Throughout the proof of Theorem \ref{thm:4.1}, the case that $M$ contains an essential torus means that (1) two copies of $\gamma$ bounds an essential annulus in both $V$ and $W$, namely, one domain of $\gamma$ is an once punctured torus in $S$; (2) $\gamma$ is not a co-core in either of these two sides of $S$.  So to eliminate the possible essential tori in $M$, it is sufficient to add some conditions related to these two cases (1) and (2).

We assemble the above argument as the following proposition.

\begin{pro}
Let $M$, $V\cup_{S}W$ and $\gamma$ be the same as in Theorem \ref{thm:4.1}. If either each domain of $\gamma$ has the Euler characteristic number less than -1  or $\gamma$ is  a co-core for one side of $S$, then $M$ is hyperbolic.
\label{pro:2}

\end{pro}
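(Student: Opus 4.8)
The plan is to argue by contradiction: assume $M$ is not hyperbolic, so by Theorem \ref{thm:4.1} (Claim \ref{clm:1} rules out the small Seifert case) $M$ is toroidal. By Facts \ref{fact:4.1}, \ref{fact:4.2} and \ref{fact:4.10}, the essential torus $T^{2}$ is unique up to isotopy, each component of $T^{2}\cap S$ is isotopic to $\gamma$, and both $T^{2}\cap V$ and $T^{2}\cap W$ are single essential annuli; moreover a boundary compression of $T^{2}\cap V$ (and of $T^{2}\cap W$) yields an essential disk in $V$ (resp. $W$) disjoint from $\gamma$. I then want to extract from this situation precisely the two phenomena in the hypothesis and contradict the negation of the hypothesis. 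So the structure is: (i) show that toroidality forces an essential annulus in $V$ and in $W$ with both boundary curves isotopic to $\gamma$, which forces a domain of $\gamma$ to be a once-punctured torus, and (ii) show that toroidality forces $\gamma$ to fail to be a co-core on both sides.

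First I would establish (i). The two annuli $T^{2}\cap V$ and $T^{2}\cap W$ have boundary two parallel copies of $\gamma$ on $S$; gluing them produces the torus, but the key local fact is that a spanning annulus in $V$ with both ends isotopic to $\gamma$, together with the neighborhood of $\gamma$ in $S$, gives an essential once-punctured-torus subsurface $F_{0}\subset S$ (the boundary of a regular neighborhood of $\gamma\cup(\text{annulus})$ is a single curve). I then argue $F_{0}$ is a domain of $\gamma$: no essential curve in $F_{0}$ bounds a disk on either side (otherwise one could improve $\gamma$ toward a disk-bounding curve disjoint from the opposite disk, contradicting the locally large hypothesis $d_{\mathcal{C}(S_{\gamma})}(\alpha,\beta)\geq 11$ via the subsurface-projection estimate as in the proof of Fact \ref{fact:4.1}), and $F_{0}$ is maximal among such subsurfaces because any essential subsurface strictly containing a once-punctured torus and still carrying no disk-bounding curve would again violate local largeness. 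Hence $-\chi(F_{0})=1$, contradicting the hypothesis ``each domain of $\gamma$ has Euler characteristic number less than $-1$'' — unless we are instead in the co-core branch of the hypothesis.

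So the remaining case is that $\gamma$ is a co-core for one side of $S$, say there is an essential disk $D\subset V$ with $|\partial D\cap\gamma|=1$. Here I would derive a contradiction directly with toroidality: if $\gamma$ is a co-core in $V$, then $V$ cut along $\gamma$ — equivalently, the annulus $T^{2}\cap V$ — cannot cut $V$ into a solid torus plus a handlebody in the way Fact \ref{fact:4.2} requires, because a boundary compression of $T^{2}\cap V$ produces a disk disjoint from $\gamma$, while the co-core disk $D$ meets $\gamma$ once; together these show $V$ cut along the annulus cannot have the nonseparating/solid-torus structure forced by the torus intersection pattern. More precisely, the annulus $T^{2}\cap V$ with both boundaries parallel to $\gamma$ co-bounds a solid torus in $V$ only if $\gamma$ is \emph{not} primitive in the relevant sense, and being a co-core is incompatible with this. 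I expect this step — rigorously ruling out the co-core case against the annulus structure — to be the main obstacle, since it requires a careful analysis of how the annulus $T^{2}\cap V$ sits in $V$ relative to the co-core disk, rather than a soft curve-complex estimate; the once-punctured-torus/domain analysis in the first case, by contrast, is a routine adaptation of the subsurface-projection arguments already used for Facts \ref{fact:4.1} and \ref{fact:4.2}.
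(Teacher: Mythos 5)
Your overall architecture coincides with the paper's: assume $M$ is not hyperbolic, invoke Theorem \ref{thm:4.1} and Facts \ref{fact:4.1}, \ref{fact:4.2}, \ref{fact:4.10} to get a single essential torus meeting $S$ in two copies of $\gamma$, and then contradict the two branches of the hypothesis separately — a domain of Euler characteristic $-1$ against the first branch, and incompatibility with the co-core condition against the second. The genuine gap is your step (ii): you explicitly defer ``rigorously ruling out the co-core case against the annulus structure,'' but this is precisely the only part the paper proves in detail (the Claim inside its proof of Proposition \ref{pro:2}), so your proposal omits the core of the argument. Moreover, the interim reasoning you offer there is not valid as stated: the coexistence of an essential disk disjoint from $\gamma$ (from boundary-compressing $T^{2}\cap V$) and a co-core disk meeting $\gamma$ once is not contradictory — a handlebody routinely contains both. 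The paper's actual argument is short and concrete: if $D\subset V$ is a disk with $|\partial D\cap\gamma|=1$, then $\partial N(\partial D\cup\gamma)$ bounds a disk $D_{1}$ (the frontier of $N(\gamma\cup D)$) cutting off a solid torus $ST$; the incompressible annulus $A_{1}=T^{2}\cap V$ can be isotoped off $D_{1}$, hence into $ST$; since $\partial D$ meets each component of $\partial A_{1}$ once, $D\cap A_{1}$ is a single spanning arc, and the sub-disk of $D$ it cuts off is a boundary-compression disk along which $A_{1}$ compresses to an inessential disk — equivalently, $A_{1}$ is parallel into $S$ — contradicting the essentiality of $T^{2}$. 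Without this (or an equivalent primitivity argument carried out), the co-core branch is simply not handled.

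A secondary point: in your step (i) the reasons you give are misdirected, although the paper is itself terse here. That no essential curve of the once-punctured torus $F_{0}$ compresses into $W$ does not need the distance-$11$ hypothesis at all: such a curve can be isotoped off $\partial F_{0}$, which bounds a disk in $V$, so its existence would already violate $d(S)=2$. And maximality of $F_{0}$ does not follow from local largeness (nothing in the hypothesis $d_{\mathcal{C}(S_{\gamma})}(\alpha,\beta)\geq 11$ forbids a larger subsurface carrying no disk-bounding curves); to the extent maximality holds, it comes from the fact that $\partial F_{0}$ bounds an essential disk, so any strictly larger candidate subsurface would contain $\partial F_{0}$ as an essential non-peripheral compressible curve, violating the defining property of a domain. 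So even in the branch you regard as routine, the mechanism you cite is not the one that does the work.
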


\begin{proof}
Suppose not. Then $M$ is not hyperbolic. Since $M$ admits a locally large distance two Heegaard splitting, by Theorem \ref{thm:4.1}, $M$ contains an essential annulus $T^{2}$.

The proof of Theorem \ref{thm:4.1} suggests that $T^{2}$ intersects $S$ in two copies of $\gamma$. It means that two copies of $\gamma$ bounds an essential annulus $A_{1}$ (resp. $A_{2}$) in $V$ (resp. $W$). Then there are two one hole tori of $S$ containing $\gamma$ in its interior from two
sides of $S$. Thus for either side of $S$,  there is one domain of $\gamma$ with Euler characteristic number equal to -1.

\begin{clm}
$\gamma$ is not a co-core for either side of $S$.
\end{clm}
\begin{proof}
Suppose not. Without loss of genericity, $\gamma$ is a co-core of the handlebody $V$. Then there is an essential disk $D$ so that $\partial D\cap \gamma$ in one point. Then $\partial N(\partial D\cup \gamma)$ bounds an essential disk $D_{1}$, which cuts $V$ into a solid torus $ST$ and a small genus handlebody. Since the annulus $A_{1}$ is essential in $V$ and $V$ is irreducible, by standard innermost disk surgery, $A\cap D_{1}=\emptyset$. Then $A_{1}\subset ST$.

As $\gamma$ is a co-core, the disk $D$ intersects  $A_{1}$  in one essential arc. Then there is a boundary compression disk $D_{0}\subset D$ for $A_{1}$ in $ST$ so that after doing a boundary compression along $D_{0}$,  $A_{1}$ is changed into a trivial disk in $V$. A contradiction.
\end{proof}

Thus these two conclusions contradict the assumption of $\gamma$.
\end{proof}

Moreover, the Proposition \ref{pro:2} can be updated into the following theorem, which is the Theorem \ref{thm2}.

\begin{thm}
Let $M$, $V\cup_{S}W$ and $\gamma$ be the same as in Proposition \ref{pro:2}. Then $M$ is hyperbolic if and only if  either each domain of $\gamma$ has the Euler characteristic number less than -1  or $\gamma$ is a co-core for one side of $S$.
\label{thm:5.3}
\end{thm}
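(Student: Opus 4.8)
The plan is to prove Theorem \ref{thm:5.3} by combining Proposition \ref{pro:2} with its converse. Proposition \ref{pro:2} already gives the ``if'' direction: if either all domains of $\gamma$ have Euler characteristic less than $-1$, or $\gamma$ is a co-core for one side of $S$, then $M$ is hyperbolic. So the only new content is the ``only if'' direction, which I would prove in its contrapositive form: if every domain of $\gamma$ fails the first condition \emph{and} $\gamma$ fails the second, then $M$ is not hyperbolic. Concretely, assume there is a domain $F$ of $\gamma$ with $-\chi(F)=1$ (note $-\chi(F)\geq 1$ always, since $F$ is essential and contains the essential curve $\gamma$; the negation of ``$<-1$'' together with this forces $\chi(F)=-1$), and assume $\gamma$ is not a co-core for either side of $S$. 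I must produce an essential torus in $M$; then, since an essential torus gives a $\mathbb{Z}^2$ in $\pi_1(M)$, $M$ cannot be hyperbolic.

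The key steps are as follows. First I would analyze what a domain $F$ of $\gamma$ with $\chi(F)=-1$ looks like: a compact essential subsurface of $S$ with $-\chi=1$ and at least one essential boundary component is either a one-holed torus or a four-holed sphere (a pair of pants cannot contain an essential simple closed curve). I would argue that the four-holed sphere possibility is excluded or reduces to the one-holed torus case for our purposes — more precisely, that $\gamma$ being non-separating and the maximality built into the definition of ``domain'' forces $F$ to be a one-holed torus $T_0$ with $\gamma\subset \mathrm{int}(T_0)$. Second, I would use the defining property of the domain: every essential simple closed curve in $T_0$ is incompressible on both sides of $S$. In particular $\gamma$ itself, and the boundary $\partial T_0$, bound no essential disk in $V$ or $W$. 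Third — and this is the crucial geometric input — I would show that on each side, two parallel copies of $\gamma$ cobound an essential annulus. On the $V$ side: since $\gamma$ is the middle vertex of a geodesic realizing the Heegaard distance, there is an essential disk $D_V\subset V$ with $\partial D_V\cap\gamma=\emptyset$; I would push $\partial D_V$ off $T_0$ (using that $\partial D_V$ cuts no curve of $T_0$, as those are all incompressible) and run an outermost-arc / boundary-compression argument on $T_0\times I$ versus $V$ to locate an annulus in $V$ with both boundary curves isotopic to $\gamma$. The hypothesis that $\gamma$ is \emph{not} a co-core on the $V$ side is what guarantees this annulus is essential rather than boundary-parallel or a meridian annulus: if it were inessential, a boundary compression of it would yield a disk meeting $\gamma$ in one point, making $\gamma$ a co-core, contradiction. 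Symmetrically one gets an essential annulus $A_2\subset W$ with $\partial A_2$ two copies of $\gamma$. Fourth, I would glue $A_1\cup A_2$ along $\gamma$ (taking the appropriate pair of the two parallel copies on each side) to form a closed surface $T^2\subset M$ that meets $S$ in copies of $\gamma$; since $\gamma$ is non-separating in $S$ but the annuli attach along a neighborhood $A$ of $\gamma$ in $S$, the glued surface is a torus (the Klein bottle being ruled out as in the proof of Fact \ref{fact:4.2}). Finally I would check $T^2$ is essential: it is incompressible because a compressing disk would, after an innermost-disk argument, produce a compressing disk for $\gamma$ or for a curve of $T_0$ on one side, contradicting incompressibility of the domain; and it is not boundary-parallel since $M$ is closed. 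Hence $M$ contains an essential torus and is not hyperbolic, completing the contrapositive.

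The main obstacle I anticipate is Step 3: cleanly producing the \emph{essential} annulus on each side from the mere existence of a disjoint compression disk, and correctly using the failure-of-co-core hypothesis. The subtlety is that a disk $D_V$ disjoint from $\gamma$ only tells us $\gamma$ survives in a compressed handlebody; one must argue that inside the solid torus $ST$ cut off by the separating disk $D_1=\partial N(\partial D_V\cup \text{(dual arc)})$ — or rather inside an appropriate piece — the curve $\gamma$ appears as the core-parallel curve so that a parallel copy cobounds an annulus, and that this annulus is incompressible and not boundary-parallel exactly because $\gamma$ is neither a meridian (it bounds no disk, by the domain property) nor a co-core (by hypothesis). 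This is essentially the reverse of the argument in the proof of Proposition \ref{pro:2}, and care is needed to make the two directions logically match up: there the co-core hypothesis was used to \emph{destroy} a putative essential annulus, here its negation must be used to \emph{build} one. A secondary technical point is justifying that the four-holed-sphere case for $F$ does not arise, or handling it in parallel; I expect that the non-separating-ness of $\gamma$ plus maximality of the domain disposes of it, but this should be spelled out. Once Step 3 is in place, Steps 4 and 5 are routine Klein-bottle-exclusion and innermost-disk arguments of the kind already used for Facts \ref{fact:4.2} and \ref{fact:4.10}.
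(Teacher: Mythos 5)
Your overall outline does match the paper's: the ``if'' direction is exactly Proposition \ref{pro:2}, and for the ``only if'' direction you argue the contrapositive by gluing essential annuli in $V$ and $W$ along parallel copies of $\gamma$ into a separating torus (Klein bottle excluded by separation) and then checking essentiality. But there is a genuine gap precisely at the step you flag as the main obstacle, and it stems from a misreading of the definition of a domain. The domain condition only forbids compressing curves that are essential, i.e.\ incompressible \emph{and non-peripheral}, in $F$; it constrains nothing about $\partial F$. The paper's proof in fact uses the opposite of what you assert: $\partial F_{1}$ (resp.\ $\partial F_{2}$) \emph{does} bound an essential disk in $V$ (resp.\ $W$), cutting off a solid torus $ST$ with $\gamma\subset\partial ST$; inside $ST$ the curve $\gamma$ is not a meridian (it is incompressible) and not a longitude (it is not a co-core), hence winds at least twice around the core, and the annulus $\overline{\partial ST-N(\gamma)}$ pushed into the interior is the required essential annulus --- this is the ``standard combinatorial technique'' behind Figure \ref{fig:5}. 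Your replacement --- starting only from a compressing disk $D_{V}$ disjoint from $\gamma$ and an ``outermost-arc/boundary-compression argument on $T_{0}\times I$ versus $V$'' --- does not produce an annulus with both boundary curves isotopic to $\gamma$, and the statement it is meant to establish is false at that level of generality: a curve that is incompressible, not a co-core, and disjoint from an essential disk need not cobound an essential annulus with a parallel copy of itself (take $\gamma$ on the genus-two factor of a boundary-connected sum of handlebodies, chosen so that no essential annulus in that factor has both boundaries on copies of $\gamma$, with $D_{V}$ the splitting disk). So the compressibility of the domain boundary, or some substitute for it, is an input you cannot discard.

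Two further points. Your essentiality check for the glued torus is too weak on the outside piece: a compressing disk on the side away from the two solid tori meets $S$ in curves that need not lie in $T_{0}$ at all, so ``incompressibility of the domain'' gives nothing there; the paper establishes incompressibility of $T^{2}$ in $M_{1}=\overline{M-M_{2}}$ by the argument of Fact \ref{fact3.2}, i.e.\ Lemma \ref{lem:compressible hole} together with the locally large hypothesis $d_{\mathcal{C}(S_{\gamma})}(\alpha,\beta)\geq 11$ --- this is exactly where the locally large condition enters, and your sketch never invokes it. Finally, two smaller slips: a compact surface with $\chi=-1$ is a one-holed torus or a pair of pants (a four-holed sphere has $\chi=-2$), and your claim that $-\chi(F)\geq 1$ for every domain is unjustified --- an annulus neighborhood of $\gamma$ could a priori be a maximal domain, and $\chi=0$ also negates ``$\chi<-1$'', so this case must be addressed (the paper passes over it as well by positing one-holed torus domains with compressible boundary at the start of its forward direction).
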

\begin{proof}
For the forward direction. Suppose that (1) there are two domains $F_{1}$ and $F_{2}$ of $\gamma$, where both $F_{1}$ and $F_{2}$ are one hole disks and $\partial F_{1}$ (resp. $\partial F_{2}$) bounds an essential disk in $V$ (resp. $W$), and (2) $\gamma$ is not a co-core for both sides of $S$.
Then $\partial F_{1}$ (resp. $\partial F_{2})$) cuts out a solid torus in $V$ (resp. $W$) containing $\gamma$. Let $A$ be closed regular neighborhood of $\gamma$. Since $\gamma$ is not a co-core for either side of $S$, by the standard combinatorial techniques, $\partial A$ bounds two essential annuli $A_{1}$ and $A_{2}$  in both of $V$ and $W$ respectively, see Figure \ref{fig:5}.
 \begin{figure}[!htbp]
\begin{center}
\includegraphics[scale=1]{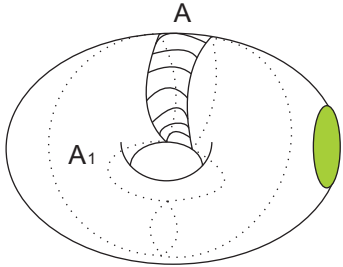}
\caption{The Essential Annulus $A_{1}$}
\label{fig:5}
\end{center}
\end{figure}

Then $A_{1}\cup A_{2}$ is a torus $T^{2}$ or a Kleinian bottle $K$. Since $A_{1}\cup A_{2}$ is separating in $M$,  it is a torus $T^{2}$.

\begin{clm}
$T^{2}$ is essential in $M$.
\label{clm:5.1}
\end{clm}
\begin{proof}
By Figure \ref{fig:5}, $A_{1}$ (resp. $A_{2}$) cuts out a solid torus $ST_{1}$ (resp. $ST_{2}$), where both of these two solid tori have the annulus $A$ as their common boundaries surface. Since $\gamma$,  the core curve of $A$,  is not a co-core of either of these two handlebodies $V$ and $W$, $M_{2}=ST_{1}\cup_{A} ST_{2}$ is a small Seifert space. Since $A$ is incompressible in both $ST_{1}$ and $ST_{2}$, $A$ is incompressible in $M_{2}$.
 For if $ T^{2}=\partial M_{2}$ is compressible in $M_{2}$, then the compression disk $D$ intersects $A$ nonempty up to isotopy. Otherwise, either $A_{1}$ or $A_{2}$ is compressible in $ST_{1}$ or $ST_{2}$ respectively. Then there is an outermost disk $D_{0}\subset D$ for $A$. Without loss of genericity,  we assume that $D_{0}\subset ST_{1}$ in $V$. It is not hard to see that $D_{0}$ is a boundary compression disk of $A_{1}$ in $V$. After dong boundary compression on $A_{1}$ along $D_{0}$,  $A_{1}$ is changed into a trivial disk in $V$, which is impossible.

Let $M_{1}=\overline{M-M_{2}}$. The proof of Fact \ref{fact3.2} suggests that $T^{2}$ is incompressible in $M_{1}$. So $T^{2}$ is incompressible in
$M$.
\end{proof}
So $M$ contains an essential torus $T^{2}$. It contradicts the assumption that $M$ is hyperbolic.

For the backward direction. The proof is contained in proof of Proposition \ref{pro:2}.
\end{proof}

\begin{rmk}
The conclusion of Theorem \ref{thm2} says that although these 3-manifolds which admit distance two Heegaard splittings are complicated,
we can still get some kind of  classification of them suggested by Geometrization Conjecture as we consider all those locally large distance two Heegaard splittings. 

\end{rmk}

 \vskip 3mm

Ruifeng Qiu\\
Department of Mathematics\\
 Shanghai Key Laboratory of PMMP\\
 East China Normal University\\
E-mail: rfqiu@math.ecnu.edu.cn\\

Yanqing Zou\\
 Department of Mathematics\\
  Dalian Nationalities University, Dalian Minzu University.
 \\E-mail: yanqing\_dut@163.com, yanqing@dlnu.edu.cn.\\

\begin{thebibliography}{99}

\bibitem{cg}
A. Casson and C. Gordon,\emph{Reducing Heegaard splittings},
Topology Appl. 27 (1987), no. 3, 275-283.

\bibitem{CS}
D. Cooper and M. Scharlemann, \emph{ The structure of a solvmanifold's Heegaard splittings},
Proceedings of the G\"{o}kova Geometry-Topology Conference 1998 (also in Turkish Journal of Mathematics 23 (1999)) 1-18.


\bibitem{FM}
B. Farb and D. Margalit,\emph{A Primer on Mapping Class Groups},
Princeton Mathematical Series.

\bibitem{Ha}
W. Haken, \emph{Some results on surfaces in 3-manifolds}, Studies in Modern Topology, Vol 5. Prentice
Hall: Math. Assoc. Am., 1968:39-98.

\bibitem{ha}
K.~Hartshorn,
\emph{Heegaard splittings of Haken manifolds have bounded distance},
Pacific J. Math. 204,  61-75(2002).

\bibitem{h81}
W.Harvey, \emph{Boundary structure of the modular group, in:
Riemann Surfaces and Related Topics}, Ann. of Math. Stud., vol. 97,
Princeton University Press, Princeton, NJ, 1981, pp. 245-251.

\bibitem{Hee}

P.Heegaard,\emph{ Forstudier til en topologisk Teori for de algebraiske Fladers Sammenhang}.
Copenhagen:Copenhagen University, 1898.

\bibitem{h01}
J. Hempel, \emph{3-manifolds as viewed from the curve complex},
Topology 40(2001), 631-657.

\bibitem{JMM}
J. Johonson, Y. Minsky and Y.Moriah, \emph{Heegaard splittings with large subsurface distances},
Algebr. Geom. Topol. 10 (2010), no. 4, 2251-2275.





















\bibitem{masur}
H. Masur,\emph{ Measured foliations and handlebodies}, Ergodic Theory and Dynamical
Systems, 6(1986), 99-116.







\bibitem{m}
Y.  Minsky, \emph{A geometric approach to the complex of curves on a surface},
Proceedings of the Taniguchi Symposium, Finland 1995. Preprint.


\bibitem{mm99}
H.  Masur and  Y.  Minsky,
 \emph{Geometry of the complex of
curves. I. Hyperbolicity}, Invent. Math. 138(1999) 103-149.


\bibitem{mm00}
H.  Masur and  Y.  Minsky, \emph{Geometry of the complex of
curves. II. Hierarchical structure}, Geom. Funct. Anal. 10(2000)
902-974.



\bibitem{ms}
H. Masur and S. Schleimer, \emph{The geometry of the disk complex},
J. Amer. Math. Soc. 26(2013), no. 1, 1-62.

\bibitem{MO}
E. Moise, \emph{ Affine structures in 3-manifolds. V. The triangulation theorem and Hauptvermutung},
Ann. of Math., 1952, 56(2):96-114.

\bibitem{MS}
Y.Moriah and J. Schultens, \emph{ Irreducible Heegaard splittings of Seifert fibered spaces are either vertical or horizontal},
Topology 37 (1998), no. 5, 1089-1112.


\bibitem{penner}
R.C. Penner and J.L. Harer, \emph{Combinatorics of Train Tracks},
Annals of Mathematics Studies, 125. Princeton University Press, Princeton, NJ, 1992. xii+216 pp.

\bibitem{Perelman01}
 G. Perelman, \emph{ The entropy formula for the Ricci flow and its geometric applications},
 arXiv:math.DG/0211159.

\bibitem{Perelman02}
 G. Perelman, \emph{Ricci flow with surgery on three-manifolds},
 arXiv:math.DG/0303109.

\bibitem{Perelman03}
 G. Perelman, \emph{Finite extinction time for the solutions to the Ricci flow on certain threemanifolds},
 arXiv:math.DG/0307245.

\bibitem{QZG}
R. Qiu, Y. Zou and Q. Guo, \emph{The Heegaard distances cover all non-negative integers},
Pacific J. Math. 275 (2015), no. 1, 231-255.

\bibitem{RT}
H. Rubinstein and A. Thompson, \emph{3-manifolds with Heegaard splittings of distance two},
In Proceedings of the Conference, Geometry and Topology Downunder, Contemporary Mathematics,  597, (2013), 341-345.

\bibitem{sch01}
M.~Scharlemann,
\emph{Proximity in the curve complex: boundary reduction and bicompressible surfaces},
Pacific J. Math. 228,  325-348(2006).

\bibitem{schultens}
J.Schultens, \emph{The classification of Heegaard splittings for (compact orientable surface)$\times$ $S^{1}$},
Proc. London Math. Soc. (3) 67 (1993), no. 2, 425-448.


\bibitem{scott}
P. Scott, \emph{ The geometries of $3$-manifolds}, Bull. London Math. Soc. 15 (1983), no. 5, 401-487.

\bibitem{AT}
A.Thompson, \emph{The disjoint curve property and genus 2 manifolds},
Topology and its Applications 97 (1999) 273-279.

\bibitem{thurston}
W.P. Thurston\emph{Three-dimensional manifolds, Kleinian groups and Hyperbolic geometry}, Bull.
Amer. Math. Soc., 1982, 6(3):357-381.








\end{thebibliography}
\end{document}